\newtheorem{theorem}{Theorem}[section]
\newtheorem{lemma}[theorem]{Lemma}
\newtheorem{proposition}[theorem]{Proposition}
\newcommand{\eee}{{\rm e}}
\newcommand{\dod}{\overset{{\rm d}}{\to}}
\newcommand{\me}{\mathbb{E}}
\newcommand{\mn}{\mathbb{N}}
\newcommand{\mr}{\mathbb{R}}
\newcommand{\mmp}{\mathbb{P}}
\DeclareMathOperator{\1}{\mathbbm{1}}
\begin{document}
\title{Limit theorems for random Dirichlet series: boundary case}\date{}
\author{Alexander Iksanov\footnote{Faculty of Computer Science and Cybernetics, Taras Shevchenko National University of Kyiv, Ukraine; e-mail address:
iksan@univ.kiev.ua} \ and \ Ruslan Kostohryz\footnote{Faculty of Computer Science and Cybernetics, Taras Shevchenko National University of Kyiv, Ukraine; e-mail address:
kostogriz2909@gmail.com}}
\maketitle

\begin{abstract}
Buraczewski et al (2023) proved a functional limit theorem (FLT) and a law of the iterated logarithm (LIL) for a random Dirichlet series $\sum_{k\geq 2}(\log k)^\alpha k^{-1/2-s}\eta_k$ as $s\to 0+$, where $\alpha>-1/2$ and $\eta_1$, $\eta_2,\ldots$ are independent identically distributed random variables with zero mean and finite variance. We prove a FLT and a LIL in a boundary case $\alpha=-1/2$. The boundary case is more demanding technically than the case $\alpha>-1/2$.
\end{abstract}

\noindent Key words: functional limit theorem; law of the iterated logarithm; random Dirichlet series

\noindent 2020 Mathematics Subject Classification: Primary: 60F15, 60F17
\hphantom{2020 Mathematics Subject Classification: } Secondary: 60G50

\section{Introduction}

Recently there has been a surge of activity around limit theorems for random Dirichlet series and their zeros. Throughout the paper, by {\it random Dirichlet series}, we mean a random series parameterized by $s>0$:
$$X_\alpha(s):=\sum_{k\geq 2}\frac{(\log k)^\alpha}{k^{1/2+s}}\eta_k,$$ where $\alpha\in\mr$ and $\eta_1$, $\eta_2,\ldots$ are independent copies of a random variable $\eta$ with zero mean and finite variance, which live on a probability space $(\Omega, \mathfrak{F}, \mmp)$. By Kolmogorov's three series theorem, the series defining $X_\alpha(s)$ converges almost surely (a.s.) for each $s>0$. Furthermore, if $\alpha<-1/2$, the series $X_\alpha(0+)$ converges a.s. by the same theorem. Thus, as far as limit theorems for $X_\alpha(s)$ as $s\to0+$ are concerned, the case $\alpha<-1/2$ is not interesting, hence excluded in the sequel.

The following functional limit theorem (FLT) and law of the iterated logarithm (LIL) were known in the case $\alpha>-1/2$. We write $\Longrightarrow$ for weak convergence in a function space and $C(0,\infty)$ and $C[0,\infty)$ for the spaces of real-valued continuous functions defined on $(0,\infty)$ and $[0,\infty)$, respectively. It is assumed that the spaces $C(0,\infty)$ and $C[0,\infty)$ are endowed with the topology of locally uniform convergence.
\begin{proposition}\label{prop:flt}
Assume that $\me[\eta]=0$, $\sigma^2:=\me [\eta^2]\in (0,\infty)$ and let $\alpha>-1/2$. Then $$\Big(s^{1/2+\alpha}\sum_{k\geq 2}\frac{(\log k)^\alpha}{k^{1/2+st}}\eta_k\Big)_{t>0}~\Longrightarrow~ \Big(\sigma\int_{[0,\infty)}y^\alpha\eee^{-ty}{\rm d}B(y)\Big)_{t>0},\quad s\to 0+$$ on $C(0,\infty)$, where $(B(y))_{y\geq 0}$ is a standard Brownian motion.
\end{proposition}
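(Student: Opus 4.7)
The natural strategy is to reduce the convergence of the random Dirichlet series to a Donsker-type functional limit theorem for an auxiliary partial-sum process, and then transfer the convergence through a continuous mapping. Introduce the c\`adl\`ag process
$$W_s(y) := \sigma^{-1} s^{1/2} \sum_{\substack{k \geq 2 \\ s \log k \leq y}} k^{-1/2}\eta_k, \qquad y \geq 0,$$
whose jump at $y = s \log k$ equals $\sigma^{-1} s^{1/2} k^{-1/2}\eta_k$. Evaluating the Stieltjes integral then yields the key identity
$$\sigma \int_0^\infty y^\alpha \eee^{-ty}\,dW_s(y) \;=\; s^{1/2+\alpha}\sum_{k \geq 2}\frac{(\log k)^\alpha}{k^{1/2+st}}\eta_k.$$
The first task is to prove $W_s \Longrightarrow B$ in the Skorokhod space $D[0,\infty)$ as $s \to 0+$; since $B$ has continuous sample paths, this upgrades to locally uniform convergence on a suitable coupling. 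Finite-dimensional convergence follows from the Lindeberg CLT, using the asymptotic $\mathrm{Var}\,W_s(y) = s \sum_{2\leq k\leq \eee^{y/s}}k^{-1} \to y$ and the uniformly vanishing magnitude of the individual summands, while tightness is classical via maximal inequalities for mean-zero independent sums.

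Next, for each fixed $t > 0$, an integration by parts recasts the Stieltjes integral against $W_s$ as an ordinary Lebesgue integral,
$$\sigma\int_0^\infty y^\alpha \eee^{-ty}\,dW_s(y) \;=\; \sigma \int_0^\infty W_s(y)\bigl(ty^\alpha - \alpha y^{\alpha-1}\bigr)\eee^{-ty}\,dy,$$
the boundary terms vanishing since $W_s(0)=0$ and $W_s$ grows at most sub-exponentially a.s. The analogous identity, in reverse, represents the Gaussian target as $\sigma \int_0^\infty B(y)(ty^\alpha-\alpha y^{\alpha-1})\eee^{-ty}\,dy$. This reformulates the proposition as the continuity of the deterministic map
$$\Phi : w \longmapsto \Bigl(t \mapsto \int_0^\infty w(y)(ty^\alpha - \alpha y^{\alpha-1}) \eee^{-ty}\,dy\Bigr),$$
sending a suitable subset of $D[0,\infty)$ into $C(0,\infty)$. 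Combining $W_s \Longrightarrow B$ with continuity of $\Phi$ on the locus of Brownian-type paths gives the conclusion via the continuous mapping theorem.

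\textbf{Main obstacle.} The delicate point is the behaviour near $y = 0$, where the kernel factor $y^{\alpha-1}$ is singular whenever $\alpha < 1$. The integral $\int_0^1 |w(y)| y^{\alpha-1}\,dy$ is finite only if $w$ decays faster than $y^{-\alpha}$ at the origin; for Brownian-type paths one has $|B(y)| = O(\sqrt{y \log \log(1/y)})$ near zero, so the integrability holds precisely when $\alpha > -1/2$, which is the standing hypothesis. To justify the continuous-mapping step uniformly along the prelimiting sequence, one needs an estimate such as $\me \sup_{y\leq \delta} W_s(y)^2 \lesssim \delta$ uniform in $s$, ensuring that the contribution from a neighbourhood of the origin is uniformly small in probability. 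This boundary control, more than any other step, is what confines the argument to the subcritical regime $\alpha > -1/2$ and signals why the boundary case $\alpha = -1/2$ treated in the present paper demands substantially more delicate analysis.
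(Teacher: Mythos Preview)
The paper does not actually prove Proposition~\ref{prop:flt}; it merely records that the result is a specialization of Corollary~2.3 in \cite{Buraczewski etal:2023}, where it is obtained via a functional limit theorem for the complex-valued series in the space of \emph{analytic} functions. Your route---a Donsker-type invariance principle for the auxiliary partial-sum process $W_s$ in $D[0,\infty)$, followed by integration by parts and a continuous-mapping/truncation argument---is therefore genuinely different from the cited proof. The analytic-function approach is stronger (it yields convergence in a finer topology and accommodates complex $\eta$), whereas yours is more elementary and makes the role of the hypothesis $\alpha>-1/2$ completely transparent through the integrability of $y^{\alpha-1/2}$ near the origin.

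Your plan is sound. The key identity, the variance asymptotic ${\rm Var}\,W_s(y)\to y$, and the Lindeberg verification are all correct, so $W_s\Longrightarrow B$ holds; the integration-by-parts reduction is valid; and the uniform second-moment bound $\me\bigl[\sup_{y\leq\delta}W_s(y)^2\bigr]\lesssim\delta$ (immediate from Kolmogorov's or Doob's maximal inequality, since $\me[W_s(\delta)^2]\sim\delta$) is exactly what is needed to show the contribution of $\int_0^\delta$ is uniformly negligible. One small omission: because convergence in $D[0,\infty)$ is only \emph{locally} uniform, you should also record a uniform-in-$s$ growth bound on $W_s(y)$ as $y\to\infty$ (for instance $\me[W_s(y)^2]\leq Cy$) to control the tail $\int_R^\infty$; combined with the factor $\eee^{-ty}$ for $t$ bounded away from $0$, this is routine but ought to be stated to make the truncation argument airtight.
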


Proposition \ref{prop:flt} follows from Corollary 2.3 in \cite{Buraczewski etal:2023}. In the cited article, the result was derived by a specialization of a FLT for $X_\alpha(s)$, with a complex-valued $\eta$, in the space of analytic functions.

For a family $(x_t)$ of real numbers denote by $C((x_t))$ the set of its limit points.
\begin{proposition}\label{prop:lil}
Assume that $\me[\eta]=0$, $\sigma^2=\me [\eta^2]\in (0,\infty)$ and let $\alpha>-1/2$. Then, almost surely,
\begin{equation*}\label{eq:limset}
C\bigg(\bigg(\Big(\frac{2^{2\alpha}}{\sigma^2 \Gamma(1+2\alpha)}\frac{s^{1+2\alpha}}{\log\log 1/s}\Big)^{1/2}\sum_{k\geq 2}\frac{(\log k)^\alpha}{k^{1/2+s}}\eta_k: s\in (0, \eee^{-1})\bigg)\bigg)=[-1,1],
\end{equation*}
where $\Gamma$ is the Euler gamma function.
\end{proposition}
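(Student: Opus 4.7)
The plan is to reduce the LIL to the classical LIL for a stationary Gaussian process via a strong-approximation argument. The key observation is that, after the change of variables $u=\log(1/s)$ and the rescaling $s^{1/2+\alpha}X_\alpha(s)$ suggested by Proposition~\ref{prop:flt}, the random Dirichlet series becomes (asymptotically) a stationary centred Gaussian process of logarithmic time, for which a Pickands-type LIL is available.

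First, by Strassen's invariance principle for i.i.d.\ finite-variance increments one realizes $(\eta_k)$ together with a standard Brownian motion $B$ on a common probability space such that the partial sums $S_n:=\eta_2+\cdots+\eta_n$ satisfy $S_n-\sigma B(n)=o(\sqrt{n\log\log n})$ a.s. Setting $a_k(s):=(\log k)^\alpha k^{-1/2-s}$, Abel summation gives $X_\alpha(s)=\sum_{k\ge 2}(a_k(s)-a_{k+1}(s))S_k$; the natural Gaussian analogue is
$$Y_\alpha(s):=\sigma\int_2^\infty(\log y)^\alpha y^{-1/2-s}\,{\rm d}B(y),$$
which coincides with $\sigma\sum_{k\ge 2}(a_k(s)-a_{k+1}(s))B(k)$ up to a bounded boundary term $-\sigma(\log 2)^\alpha 2^{-1/2-s}B(2)$. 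Combining the Strassen bound with the estimate
$$\sum_{k\ge 2}|a_k(s)-a_{k+1}(s)|\sqrt{k\log\log k}=o\bigl(s^{-(1/2+\alpha)}(\log\log 1/s)^{1/2}\bigr),$$
obtained by splitting at $k\asymp 1/s$ and using $|a_k(s)-a_{k+1}(s)|\asymp(\log k)^\alpha k^{-3/2-s}$, shows that $|X_\alpha(s)-Y_\alpha(s)|$ is negligible on the LIL scale, uniformly for $s\in(0,s_0]$ with $s_0>0$ sufficiently small.

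Next, set $\hat Y(u):=\eee^{-u(1/2+\alpha)}Y_\alpha(\eee^{-u})$, $u>0$. A direct covariance computation (substitute $x=\log y$ and observe that $\eee^{-u_1}+\eee^{-u_2}=2\eee^{-(u_1+u_2)/2}\cosh((u_1-u_2)/2)$) yields
$$\mathrm{Cov}\bigl(\hat Y(u_1),\hat Y(u_2)\bigr)\longrightarrow\frac{V}{\cosh\bigl((u_1-u_2)/2\bigr)^{1+2\alpha}}\qquad\text{as }\min(u_1,u_2)\to\infty,$$
with $V:=\sigma^2\Gamma(1+2\alpha)/2^{1+2\alpha}$, so that $\hat Y$ is asymptotically a stationary centred Gaussian process with variance $V$ and exponentially decaying correlations (decay rate $\eee^{-|h|(1+2\alpha)/2}$, which requires $\alpha>-1/2$). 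The classical LIL for such Gaussian processes---pick a subsequence $u_n=\theta n$, apply the Gaussian tail estimate together with Borel--Cantelli for the upper bound $\limsup \hat Y(u_n)/\sqrt{2V\log n}\le 1$, and exploit the asymptotic independence of $\hat Y(u_m),\hat Y(u_n)$ at large lags via the second Borel--Cantelli lemma for the matching lower bound---then gives that the set of limit points of $\hat Y(u)/\sqrt{2V\log u}$ as $u\to\infty$ equals $[-1,1]$ a.s. A routine modulus-of-continuity estimate for $\hat Y$ fills in between the $u_n$. Translating back via $u=\log(1/s)$, the prefactor $\eee^{-u(1/2+\alpha)}/\sqrt{2V\log u}$ is precisely the normalizing coefficient $c(s)$ of Proposition~\ref{prop:lil} (since $2V=\sigma^2\Gamma(1+2\alpha)/2^{2\alpha}$), and the approximation in the previous step transfers the limit set from $(c(s)Y_\alpha(s))$ to $(c(s)X_\alpha(s))_{s\in(0,\eee^{-1})}$.

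I expect the main obstacle to be the strong-approximation step: one must control the error $X_\alpha(s)-Y_\alpha(s)$ uniformly across the continuum $s\to 0+$ at precisely the LIL scale, which forces a careful splitting of the sum $\sum_k|a_k(s)-a_{k+1}(s)|\sqrt{k\log\log k}$ according to the regime $k\lessgtr 1/s$. A secondary technical point is the Gaussian equicontinuity estimate for $\hat Y$ needed to pass from the geometric subsequence $u_n$ to all $u\to\infty$; this is standard but requires checking that the covariance of $\hat Y$ is Hölder continuous on the diagonal.
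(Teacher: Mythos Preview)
Your strong-approximation step contains a fatal error. The displayed estimate
\[
\sum_{k\ge 2}|a_k(s)-a_{k+1}(s)|\sqrt{k\log\log k}=o\bigl(s^{-(1/2+\alpha)}(\log\log 1/s)^{1/2}\bigr)
\]
is false. With $|a_k(s)-a_{k+1}(s)|\asymp(\log k)^\alpha k^{-3/2-s}$ the left-hand side is comparable to
\[
\int_\eee^\infty \frac{(\log x)^\alpha(\log\log x)^{1/2}}{x^{1+s}}\,{\rm d}x
=\int_1^\infty u^\alpha(\log u)^{1/2}\eee^{-su}\,{\rm d}u
\;\sim\;\frac{\Gamma(1+\alpha)}{s^{1+\alpha}}\,(\log 1/s)^{1/2},\quad s\to 0+,
\]
obtained by the substitution $v=su$ (the mass sits at $u\asymp 1/s$, i.e.\ $k\asymp \eee^{1/s}$, not at $k\asymp 1/s$ as you suggest). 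This exceeds the LIL scale $s^{-(1/2+\alpha)}(\log\log 1/s)^{1/2}$ by the divergent factor $s^{-1/2}(\log 1/s)^{1/2}(\log\log 1/s)^{-1/2}$. Hence the Strassen bound $S_k-\sigma B(k)=o((k\log\log k)^{1/2})$ is far too weak to transfer the LIL from $Y_\alpha$ to $X_\alpha$: the naive triangle-inequality control of $X_\alpha(s)-Y_\alpha(s)$ gives only $o(s^{-1-\alpha}(\log 1/s)^{1/2})$, which swamps the signal. Even Major's sharper $O(n^{1/2})$ approximation removes only the $(\log 1/s)^{1/2}$ factor and still leaves you off by $s^{-1/2}$.

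This is precisely the obstruction the paper spells out in Section~\ref{sect:failure} for the boundary case $\alpha=-1/2$; the same calculation with the extra factor $(\log x)^{\alpha+1/2}$ shows the approach fails for every $\alpha>-1/2$ as well. The reason is structural: the crude bound $\sum_k|a_k-a_{k+1}|\,|S_k|$ ignores the cancellation that makes $X_\alpha(s)$ of order $s^{-1/2-\alpha}$ rather than $s^{-1-\alpha}$, and the strong-approximation error $S_k-\sigma B(k)$ has no such cancellation to exploit. Note also that the paper does not prove Proposition~\ref{prop:lil} at all; it is quoted from \cite{Buraczewski etal:2023}, whose argument (like the paper's proof of Theorem~\ref{main}) proceeds by direct truncation, exponential-moment bounds and Borel--Cantelli rather than via a Gaussian coupling.
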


Proposition \ref{prop:lil} can be found in Theorem 3.1 of \cite{Buraczewski etal:2023}. A classical form of the LIL in terms of $\limsup$ and $\liminf$ was earlier obtained in Theorem 1.1 of \cite{Aymone+Frometa+Misturini:2020} in a rather particular case $\mmp\{\eta=\pm 1\}=1/2$ and $\alpha=0$. Nevertheless, we stress that the work \cite{Aymone+Frometa+Misturini:2020} gave impetus to both \cite{Buraczewski etal:2023} and the present paper.

Our purpose is to formulate and prove counterparts of Propositions \ref{prop:flt} and \ref{prop:lil} in a boundary case $\alpha=-1/2$.

In the case $\alpha=0$, real zeros of random Dirichlet series have been in focus of the recent papers \cite{Aymone:2019, Aymone+Frometa+Misturini:2024, Zhao+Huang:2024} (it is assumed in \cite{Zhao+Huang:2024} that the distribution of $\eta$ is symmetric $\gamma$-stable for $\gamma\in (0,2]$). In the case $\alpha>-1/2$, limit theorems for complex and real zeros of $s\mapsto X_\alpha(s)$ were proved in \cite{Buraczewski etal:2023}. Although we do not directly investigate zeros of $s\mapsto X_{-1/2}(s)$ in the present paper, our LIL stated in Theorem \ref{main} below entails that, a.s., $s\mapsto X_{-1/2}(s)$ has infinitely many real zeros in any right neighborhood of $0$.

\section{Main results}

We start by stating a FLT for $X_{-1/2}(s)$, properly scaled, as $s\to 0+$. If $\alpha>-1/2$, the variance of $X_\alpha(s)$ exhibits a polynomial growth, whereas the growth of ${\rm Var}\,[X_{-1/2}(s)]$ is logarithmic. This partly justifies the facts that the scaling of time in Proposition \ref{prop:flt} is $st$, that is, standard, whereas the scaling of time in Theorem \ref{thm:flt} is $s^t$.
\begin{theorem}\label{thm:flt}
Assume that $\me[\eta]=0$ and $\sigma^2=\me [\eta^2]\in (0,\infty)$. Then $$\Big(\frac{1}{(\log 1/s)^{1/2}}\sum_{k\geq 2}\frac{(\log k)^{-1/2}}{k^{1/2+s^t}}\eta_k\Big)_{t\geq 0}~\Longrightarrow~ (\sigma B(t))_{t\geq 0},\quad s\to 0+$$ on $C[0,\infty)$, where $(B(t))_{t\geq 0}$ is a standard Brownian motion.
\end{theorem}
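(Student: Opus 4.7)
The plan is to follow the standard two-step route for functional central limit theorems: convergence of finite-dimensional distributions, then tightness in $C[0,T]$ for every $T>0$, after which weak convergence on $C[0,\infty)$ is automatic. Write $Y_s(t)$ for the normalized sum on the left-hand side.

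For the finite-dimensional distributions I would begin with
$$\mathrm{Cov}(Y_s(t_1),Y_s(t_2))=\frac{\sigma^2}{\log(1/s)}\sum_{k\geq 2}\frac{1}{k^{1+s^{t_1}+s^{t_2}}\log k}.$$
The key analytic input is the asymptotic $\sum_{k\geq 2}(k^{1+v}\log k)^{-1}\sim\log(1/v)$ as $v\to 0+$, which I would prove by substituting $y=\log k$, comparing to $\int_{\log 2}^\infty y^{-1}\eee^{-vy}\,{\rm d}y$, and invoking the standard asymptotics of the exponential integral. Combined with the elementary identity $-\log(s^{t_1}+s^{t_2})\sim\min(t_1,t_2)\log(1/s)$, this yields $\mathrm{Cov}(Y_s(t_1),Y_s(t_2))\to\sigma^2\min(t_1,t_2)$. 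Convergence of the joint law $(Y_s(t_1),\dots,Y_s(t_n))$ to the Gaussian vector with this covariance then follows from the Cram\'er--Wold device, applying the Lindeberg--Feller CLT to $\sum_j a_jY_s(t_j)$: the coefficients in front of the individual $\eta_k$ are uniformly $O((\log 1/s)^{-1/2})$, while the total variance is bounded, so under $\me[\eta^2]<\infty$ Lindeberg's condition reduces to $\me[\eta^2\mathbf{1}_{\{|\eta|>A\}}]\to 0$ as $A\to\infty$.

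For tightness, the first step is a direct computation showing $\mathrm{Var}(Y_s(t)-Y_s(u))\to\sigma^2(t-u)$ uniformly on compacts, and the stronger estimate $\mathrm{Var}(Y_s(t)-Y_s(u))\leq C(t-u)$ valid for all sufficiently small $s$; this drops out of expanding $(k^{-s^t}-k^{-s^u})^2$ and reusing the asymptotic above. Because $\eta$ has only two finite moments, the Kolmogorov--Chentsov criterion with a fourth moment is unavailable, and this is the principal technical hurdle of the theorem. My plan is to truncate $\eta_k=\eta_k\mathbf{1}_{\{|\eta_k|\leq M\}}+\eta_k\mathbf{1}_{\{|\eta_k|>M\}}$, handle the bounded part by a chaining (or direct Kolmogorov--Chentsov) argument using its now-available higher moments, and control the $L^2$-small residual by discretizing $[0,T]$ at a fine mesh and applying a L\'evy- or Etemadi-type maximal inequality for sums of independent random variables on each cell, passing from the increment variance to a uniform modulus-of-continuity estimate. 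Particular attention must be paid to the narrow band of indices with $\log k\asymp (1/s)^t$, where the $t$-dependence of the summands is strongest and where the main residual difficulty is likely to arise; once this is controlled, tightness on each $C[0,T]$ combines with the fdd convergence above to complete the proof.
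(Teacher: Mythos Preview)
Your treatment of finite-dimensional distributions is essentially the same as the paper's and is fine.

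The gap is in the tightness argument. Truncating at a fixed level $M$ does not resolve the difficulty you identify, it merely postpones it: the residual process
\[
t\longmapsto \frac{1}{(\log 1/s)^{1/2}}\sum_{k\ge 2}\frac{(\log k)^{-1/2}}{k^{1/2+s^t}}\,\eta_k\mathbf{1}_{\{|\eta_k|>M\}}
\]
is again a continuous-parameter process built from i.i.d.\ summands with only two finite moments, and you are back to exactly the problem you set out to avoid. The L\'evy--Ottaviani or Etemadi inequalities control $\max_{n\le N}|S_n|$ for partial sums indexed by $n$; here the parameter is $t$, the summation index is $k$, and the increments $Y_s(t_2)-Y_s(t_1)$ over disjoint $t$-intervals are \emph{not} independent (they all involve the same $\eta_k$'s), so those inequalities do not deliver a uniform modulus of continuity in $t$. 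Discretising $[0,T]$ at a mesh $\delta$ only reduces the question to controlling $\sup_{|t-u|\le\delta}|Y_s^{>M}(t)-Y_s^{>M}(u)|$, which is the tightness estimate itself.

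The paper's device is to truncate at a $k$-dependent and $s$-dependent level, namely $|\eta_k|\le k^{1/2}a(s)$ with $a(s)\to\infty$. This has two effects that a fixed-$M$ cut cannot reproduce. First, since $\me[\eta^2]<\infty$ implies $k^{-1/2}|\eta_k|\to 0$ a.s., the ``large'' part $|\eta_k|>k^{1/2}a(s)$ is eventually empty for all $k$ in the relevant range, so the tail contribution vanishes almost surely and in $L^1$ uniformly in $t$, not merely in $L^2$. Second, the truncated variables satisfy $|\eta_k^\ast(s)|\le 2k^{1/2}a(s)$, and when multiplied by the coefficient $(\log k)^{-1/2}k^{-1/2-\exp(-ts)}$ this produces a bound small enough to run the inequality $e^x\le 1+x+\tfrac{x^2}{2}e^{|x|}$ and obtain genuine sub-Gaussian exponential tail estimates for the increments. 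These then feed a dyadic chaining on $[0,1]$ (reparametrising $s\mapsto e^{-s}$ so that $t$ lives in a bounded interval with the right scaling) and give tightness. Your plan would work if you replaced the fixed-$M$ truncation by this growing, $k$-dependent one and traded the maximal-inequality step for the exponential-moment chaining.
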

Observe that the limit process in Theorem \ref{thm:flt} is nowhere differentiable a.s., whereas the limit process in Proposition \ref{prop:flt} is infinitely differentiable a.s. This distinction is a manifestation of intricacy of the boundary case $\alpha=-1/2$.

We proceed with a LIL for $X_{-1/2}(s)$ as $s\to 0+$. The FLT given in Theorem \ref{thm:flt} was used for guessing the LIL's form, namely, the factor $\log\log\log 1/s$ in the normalization.
\begin{theorem}\label{main}
Assume that $\me[\eta]=0$ and $\sigma^2=\me[\eta^2]\in(0,\infty)$. Then
\begin{equation}\label{3.3}
C\Big(\Big(\Big(\frac{1}{2\sigma^2\log 1/s\, \log \log\log 1/s}\Big)^{1/2}\sum_{k\ge 2}\frac{(\log k)^{-1/2}}{k^{1/2+s}}\eta_k : s\in(0, \eee^{-\eee})\Big)\Big)=[-1,1]\quad\text{\rm a.s.}
\end{equation}
In particular,
\begin{equation}\label{3.1}
\limsup_{s\to 0+}\Big(\frac{1}{\log 1/s\, \log\log\log 1/s}\Big)^{1/2}\sum_{k\ge 2}\frac{(\log k)^{-1/2}}{k^{1/2+s}}\eta_k=(2\sigma^2)^{1/2}\quad\text{\rm a.s.}
\end{equation}
and
\begin{equation}\label{3.2}
\liminf_{s\to 0+}\Big(\frac{1}{\log 1/s\, \log\log\log 1/s}\Big)^{1/2}\sum_{k\ge 2}\frac{(\log k)^{-1/2}}{k^{1/2+s}}\eta_k=-(2\sigma^2)^{1/2}\quad\text{\rm a.s.}
\end{equation}
\end{theorem}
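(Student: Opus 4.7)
Set $u := \log(1/s)$, $S(s) := \sum_{k\ge 2}(\log k)^{-1/2}k^{-1/2-s}\eta_k$, and $T(s) := S(s)/(2\sigma^2 u\log\log u)^{1/2}$, so that \eqref{3.3} reads $C((T(s):s\in(0,\eee^{-\eee})))=[-1,1]$ a.s. The variance
$v(s) := \me[S(s)^2] = \sigma^2\sum_{k\ge 2}(\log k)^{-1}k^{-1-2s}$
satisfies $v(s)\sim\sigma^2 u$ as $s\to 0+$ (via $y=\log x$, the integral $\int_{\log 2}^\infty y^{-1}\eee^{-2sy}\,\mathrm{d}y\sim \log(1/s)$), consistent with Theorem~\ref{thm:flt}. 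Since $s\mapsto T(s)$ is a.s.\ continuous on $(0,\eee^{-\eee})$, the limit-point set is closed and connected, so it suffices to prove (a) $\limsup_{s\to 0+}T(s)\le 1$ a.s.\ and (b) $\limsup_{s\to 0+}T(s)\ge 1$ a.s.; the liminf counterparts follow upon replacing $(\eta_k)$ by $(-\eta_k)$. The overall strategy is the Hartman--Wintner LIL scheme, transcribed to a continuous parameter $s$.

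\textbf{Upper bound.}
Fix $\theta>1$ and $\varepsilon>0$; set $s_n := \exp(-\theta^n)$, $u_n := \theta^n$, $v_n := v(s_n)$. I truncate in the Hartman--Wintner style: $\eta_k = \eta_k^{\prime}+\eta_k^{\prime\prime}$ with $\eta_k^{\prime} := \eta_k\1_{\{|\eta_k|\le\sqrt{k}\}}$. The equivalence $\me[\eta^2]<\infty \Leftrightarrow \sum_k\mmp\{|\eta|>\sqrt{k}\}<\infty$ and Borel--Cantelli~I give $\eta_k=\eta_k^{\prime}$ for all $k\ge K(\omega)$ a.s., so the contribution of $(\eta_k^{\prime\prime})$ is a random finite sum, $O_\omega(1)$ uniformly for $s$ near $0$, hence negligible at the LIL scale. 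The centering shift is bounded by $|\sum_k a_k(s_n)\me[\eta_k^{\prime}]| \le \sigma^2\sum_k(\log k)^{-1/2}k^{-1-s_n} = O(\sqrt{u_n})$, again negligible. The centered bounded sum has variance $\le v_n$ and per-summand bound $|a_k(s_n)|\cdot 2\sqrt{k} = O(1)$, so Bernstein's inequality gives
\[
\mmp\bigl\{\bigl|\textstyle\sum_k a_k(s_n)(\eta_k^{\prime}-\me[\eta_k^{\prime}])\bigr| \ge (1+\varepsilon)(2v_n\log n)^{1/2}\bigr\} \le 2\,n^{-(1+\varepsilon)^2+o(1)},
\]
which is summable; Borel--Cantelli~I then yields $\limsup_n T(s_n) \le (1+\varepsilon)\theta^{1/2}$ a.s. The oscillation $\sup_{s\in[s_{n+1},s_n]}|S(s)-S(s_n)|$ is handled by a maximal inequality on a fine discretization of $[s_{n+1},s_n]$, using $|k^{-s}-k^{-s_n}|\le (\log k)|s-s_n|$; this adds at most a factor $\theta^{1/2}$. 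Sending $\theta\downarrow 1$, $\varepsilon\downarrow 0$ yields (a).

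\textbf{Lower bound.}
For large $\theta>1$ and $s_n$ as above, set $K_n := \lfloor\eee^{1/s_n}\rfloor$, $I_n := (K_{n-1},K_n]$, and decompose $S(s_n) = B_n + R_n$ with $B_n := \sum_{k\in I_n}(\log k)^{-1/2}k^{-1/2-s_n}\eta_k$. The substitution $z=2s_n\log k$ in the integral comparison gives $\mathrm{Var}[B_n]\sim\sigma^2 u_n(1-\theta^{-1})$ and $\mathrm{Var}[R_n]\le\sigma^2 u_{n-1}+O(1) = O(u_n/\theta)$; the $(B_n)$ are independent. A standard CLT/moderate-deviation lower tail estimate (Lindeberg's condition holds within each block since the coefficients $a_k(s_n)$ are uniformly small on $I_n$) gives, for all large $n$,
\[
\mmp\{B_n \ge (1-\varepsilon)(2\mathrm{Var}[B_n]\log n)^{1/2}\} \ge (\log n)^{-1/2}\,n^{-(1-\varepsilon)^2},
\]
whose sum over $n$ diverges. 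Borel--Cantelli~II applied to the independent events yields $\limsup_n B_n/(2\sigma^2 u_n\log n)^{1/2} \ge (1-\varepsilon)\sqrt{1-\theta^{-1}}$ a.s. The remainder satisfies $|R_n| = o((u_n\log n)^{1/2})$ a.s., by applying the already-proved upper bound (a) to the Dirichlet subseries defining the past part of $R_n$ (of variance $O(u_n/\theta)$) and a direct Borel--Cantelli bound on the bounded-variance future tail. Sending $\theta\to\infty$, $\varepsilon\to 0$ gives (b).

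\textbf{Main obstacle.}
The delicate point is step (a) under only a finite second moment. The exponent in the Bernstein tail must equal $(1+\varepsilon)^2+o(1)$, so that the factor $\theta^{1/2}$ from the oscillation bound and the slack $\varepsilon$ from the subsequence estimate can both be removed in the limit without losing the constant $1$; the Hartman--Wintner-style truncation at $\sqrt{k}$, rather than a single uniform threshold $b_n$, is essential because it replaces the unbounded residual by a random finite sum (no Fuk--Nagaev inequality needed) and keeps the per-summand bound in Bernstein uniformly $O(1)$. The second sensitive ingredient is the a.s.\ oscillation control over the continuous parameter $s\in[s_{n+1},s_n]$: Theorem~\ref{thm:flt} is only a weak-convergence statement and cannot be invoked directly for the a.s.\ LIL, so a tailored maximal inequality (obtained after the same truncation) is required. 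Reconciling these two elements is where the boundary nature of the case $\alpha=-1/2$ manifests technically.
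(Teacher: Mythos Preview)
Your overall Hartman--Wintner plan matches the paper's strategy, and your truncation at $\sqrt{k}$ is a legitimate simplification of the paper's $s$-dependent truncation. However, there is a genuine gap in the oscillation step of the upper bound. The Lipschitz bound $|k^{-s}-k^{-s_n}|\le(\log k)|s-s_n|$ cannot yield the claimed maximal inequality: after your truncation $|\eta_k'|\le\sqrt k$, the resulting pathwise bound on $|S(s)-S(s_n)|$ involves $|s-s_n|\sum_k(\log k)^{1/2}k^{-s_{n+1}}$, and that sum diverges. The point is that the increment variance is not governed by $|s-s'|$ at all; a direct calculation (carried out in the proof of Lemma~\ref{lemma5}) gives $\mathrm{Var}[S(s)-S(s')]\asymp \log\bigl((s+s')^2/(4ss')\bigr)$, which for $s'=s_n$, $s=s_{n+1}$ is of order $u_n(\theta-1)$, a fixed fraction of $v_n$. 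The paper resolves this via a change of variable to $v=1/\log(1/s)$ followed by dyadic chaining on $[v_{n+1},v_n]$; its subsequence $s_n=\exp(-\exp(n^{1-\gamma}))$ is chosen precisely so that $v_n/v_{n+1}\to 1$, which makes the normalized oscillation $f(s)(S(s)-S(s_{n+1}))\to 0$ a.s. With your $s_n=\exp(-\theta^n)$ one has $v_n/v_{n+1}=\theta$ fixed; a chaining argument adapted to this setting could still produce an oscillation contribution of order $\sqrt{\theta-1}$ (recoverable as $\theta\downarrow 1$), but it requires the full variance-based chaining of Lemma~\ref{lemma5}, not the Lipschitz shortcut you propose.

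A secondary gap is in the lower bound: you invoke ``a standard CLT/moderate-deviation lower tail estimate'' for $B_n$, citing only Lindeberg's condition. Lindeberg yields the CLT but not moderate deviations at level $\sqrt{\log n}$ under a bare second-moment hypothesis on $\eta$. You must either truncate first (so that the summands in $B_n$ become bounded by $(\log K_{n-1})^{-1/2}\to 0$, after which a Cram\'er-type bound for triangular arrays applies) or carry out the exponential tilting explicitly, as the paper does in Lemma~\ref{lemma5.8}.
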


Since ${\rm Var}\,[X_{-1/2}(s)]\sim \sigma^2 \log 1/s$ as $s\to 0+$, we infer $\log\log {\rm Var}\,[X_{-1/2}(s)]\sim \log\log\log 1/s$ as $s\to 0+$, where, as usual, $f(s)\sim g(s)$ as $s\to 0+$ means that $\lim_{s\to 0+}(f(s)/g(s))=1$. Thus, formulas \eqref{3.1} and \eqref{3.2} are indeed laws of the {\it iterated} logarithm.

The remainder of the paper is structured as follows. Theorems \ref{main} and  \ref{thm:flt} are proved in Sections \ref{sect:aux} and \ref{sect:flt}, respectively. The reversed order of the proofs is necessitated by the fact that our proof of Theorem \ref{thm:flt} uses some arguments and calculations from the proof of Theorem \ref{main}. At the first glance it looks plausible that an economical  proof of the LIL may be based on a strong approximation by a Brownian motion of a standard random walk generated by $\eta$. In Section \ref{sect:failure} we explain that this naive idea fails.

\section{Proof of Theorem \ref{main}}\label{sect:aux}

It is convenient to split the presentation into two pieces. We start by proving one half of Theorem \ref{main}. In what follows we write $\log^{(3)}$ for $\log\log\log$.
\begin{proposition}\label{pr1}
Under the assumptions of Theorem \ref{main},
\begin{equation}\label{5.1}
\limsup_{s\to 0+}\Big(\frac{1}{\log 1/s\, \log^{(3)} 1/s}\Big)^{1/2}\sum_{k\ge 2}\frac{(\log k)^{-1/2}}{k^{1/2+s}}\eta_k\leq(2\sigma^2)^{1/2}\quad\text{\rm a.s.}
\end{equation}
and
\begin{equation}\label{5.2}
\liminf_{s\to 0+}\Big(\frac{1}{\log 1/s\, \log^{(3)}1/s}\Big)^{1/2}\sum_{k\ge 2}\frac{(\log k)^{-1/2}}{k^{1/2+s}}\eta_k\geq-(2\sigma^2)^{1/2}\quad\text{\rm a.s.}
\end{equation}
\end{proposition}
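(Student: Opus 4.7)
The plan is to follow the classical Hartman--Kolmogorov strategy for the upper half of a LIL, adapted to the Dirichlet series $Y(s):=\sum_{k\ge 2}(\log k)^{-1/2}k^{-1/2-s}\eta_k$. A preliminary ingredient is the variance asymptotic ${\rm Var}\,Y(s)=\sigma^2 v(s)$ with $v(s):=\sum_{k\ge 2}(\log k)^{-1}k^{-1-2s}$; after the substitution $u=\log k$, comparing with $\int_{\log 2}^\infty u^{-1}\eee^{-2su}\,du$ yields $v(s)\sim\log(1/s)$ as $s\to 0+$. Since $\eta$ need not possess exponential moments, I next truncate each summand at a level of the form $c_k\asymp (k\log k)^{1/2}M(s)$ with $M(s)\to\infty$ slowly (so that $c_k a_k(s)\to 0$), and write $\eta_k=\bar\eta_k+\check\eta_k$ with $\bar\eta_k$ the centered truncation; a second-moment/Borel--Cantelli estimate shows that $\check Y(s):=\sum_k a_k(s)\check\eta_k$, where $a_k(s):=(\log k)^{-1/2}k^{-1/2-s}$, is a.s.\ of strictly smaller order than $(\log(1/s)\log^{(3)}(1/s))^{1/2}$.

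Next, along the geometric subsequence $s_n:=\exp(-\theta^n)$, $\theta>1$ to be fixed later, I apply Bernstein's exponential inequality to $\bar Y(s_n)$. Since $\max_k c_k a_k(s_n)=o(v(s_n)^{1/2})$, the Bernstein correction term is negligible compared with the Gaussian main term, and the bound is essentially sub-Gaussian:
$$\mmp\Big(\bar Y(s_n)\ge (1+\varepsilon)\bigl(2\sigma^2\log(1/s_n)\log^{(3)}(1/s_n)\bigr)^{1/2}\Big)\le \exp\bigl(-(1+\varepsilon/2)^2\log^{(3)}(1/s_n)\bigr)$$
for $n$ large. Since $\log^{(3)}(1/s_n)=\log(n\log\theta)$, the right-hand side decays polynomially in $n$ and is summable, so Borel--Cantelli delivers $\limsup_n \bar Y(s_n)/(2\sigma^2\log(1/s_n)\log^{(3)}(1/s_n))^{1/2}\le 1+\varepsilon$ a.s.

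The principal difficulty is to control the oscillation $O_n:=\sup_{s\in[s_{n+1},s_n]}|\bar Y(s)-\bar Y(s_n)|$ between successive terms of the subsequence. Splitting the series defining $\bar Y(s)-\bar Y(s_n)$ at $k\asymp \eee^{1/s_n}$, and using $(k^{-s}-k^{-s_n})^2\le(s_n-s_{n+1})^2(\log k)^2 k^{-2s_n}$ on the small-$k$ part together with $(k^{-s}-k^{-s_n})^2\le k^{-2s_{n+1}}$ on the tail, one obtains ${\rm Var}\,(\bar Y(s)-\bar Y(s_n))=O((\theta-1)\log(1/s_n))$ uniformly in $s\in[s_{n+1},s_n]$. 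Applying Bernstein to $\bar Y(s)-\bar Y(s_n)$ on a deterministic mesh of $[s_{n+1},s_n]$ of polynomial size in $n$, and interpolating by a crude Lipschitz bound between mesh points, yields $\mmp(O_n>\varepsilon(\log(1/s_n)\log^{(3)}(1/s_n))^{1/2})\le n^{-c\varepsilon^2/(\theta-1)}$. Fixing $\theta-1$ sufficiently small relative to $\varepsilon$ makes this summable, and Borel--Cantelli then yields $O_n=o((\log(1/s_n)\log^{(3)}(1/s_n))^{1/2})$ a.s. Combining the subsequential bound with the oscillation control and letting $\varepsilon\to 0+$ proves \eqref{5.1}; inequality \eqref{5.2} follows by applying \eqref{5.1} to $(-\eta_k)_{k\ge 1}$.
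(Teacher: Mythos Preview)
Your overall architecture---truncate, control the truncation remainder, apply an exponential (Bernstein) inequality along a subsequence $s_n$, then bridge the gaps---is exactly the paper's strategy, and your subsequence step is essentially Lemma~\ref{lemma4}. However, the oscillation step contains a genuine gap.

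The claim that a polynomial-size mesh on $[s_{n+1},s_n]$ together with a ``crude Lipschitz bound between mesh points'' suffices is not tenable. The derivative $Y'(s)=-\sum_{k\ge 2}(\log k)^{1/2}k^{-1/2-s}\eta_k$ has variance of order $s^{-2}$, hence $|Y'(s)|$ is of order $s^{-1}$ on $[s_{n+1},s_n]$ (a.s., by the $\alpha=1/2$ LIL, or in probability). With your choice $s_n=\exp(-\theta^n)$ the interval has length $\asymp s_n$, so a mesh of size $n^p$ has cells of width $\asymp s_n/n^p$, and the Lipschitz error across one cell is of order $n^{-p}s_n/s_{n+1}=n^{-p}\exp((\theta-1)\theta^n)$, which explodes. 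No polynomial mesh can absorb this, and a non-uniform mesh on the $v=1/\log(1/s)$ scale fares no better, since $|\partial Y/\partial v|\asymp(\log 1/s)^2$ and $(v_n-v_{n+1})(\log 1/s_{n+1})^2\asymp\theta^{n}$. The paper (Lemma~\ref{lemma5}) replaces this step by a full dyadic chaining argument on the $v$-scale: one controls the increments $Y_n(t_{j,m})-Y_n(t_{j,m-1})$ at \emph{every} dyadic level $j\ge 0$ via exponential inequalities, and sums the resulting bounds over $j$ using that the increment variance at level $j$ is $\le 2^{-j}(v_n-v_{n+1})/v_{n+1}^2$ (formula~\eqref{eq:ajn}). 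This multi-scale control is what makes the oscillation bound go through; a single-scale mesh cannot.

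Two smaller issues. First, your assertion ``$c_k a_k(s)\to 0$'' is literally false: for $k=2$ one has $c_k a_k(s)\asymp M(s)\to\infty$. The paper handles this by first removing the initial fragment $\sum_{k\le M(s)}$ (Lemma~\ref{lemma_2}) so that the effective truncation parameter is bounded on the remaining range. Second, a ``second-moment/Borel--Cantelli'' control of $\check Y(s_n)$ only yields $\mmp(|\check Y(s_n)|>\varepsilon(\log(1/s_n)\log^{(3)}(1/s_n))^{1/2})=O(1/\log n)$, which is not summable; the paper (Lemma~\ref{lemma_3}) instead shows the truncation tail is eventually \emph{identically zero} a.s., using $\sup_{k}k^{-1/2}|\eta_k|<\infty$ a.s.
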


Replacing $\eta_k$ with $\eta_k/\sigma$ we can work under the assumption that $\sigma^2=1$. For $s\in(0, \eee^{-\eee})$, put $$f(s)=\Big(\frac{1}{2\log 1/s\, \log^{(3)}1/s}\Big)^{1/2}.$$
Let $M: (0,\infty)\to \mn_0$ denote a 
function satisfying $\lim_{s\to 0+}M(s)=+\infty$ and
\begin{equation}\label{eq:growth}
\lim_{s\to 0+}\frac{M(s)}{\log 1/s}=0.
\end{equation}
Here, as usual, $\mn_0:=\mn\cup\{0\}$.

We start by removing from the series in focus an initial fragment with a vanishing contribution. In all the lemmas given below we assume without further notice that the assumptions of Theorem \ref{main} are in force.
\begin{lemma}\label{lemma_2}
The following limit relation holds
$$\lim_{s\to0+}f(s)\sum_{k=2}^{M(s)}\frac{(\log k)^{-1/2}}{k^{1/2+s}}\eta_k=0\quad\text{\rm a.s.}$$
\end{lemma}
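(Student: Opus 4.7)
\emph{The plan.} Since $M(s)\to\infty$ and $M(s)=o(\log(1/s))$, the truncated sum has variance
\[
{\rm Var}\Bigl[\sum_{k=2}^{M(s)}\frac{(\log k)^{-1/2}}{k^{1/2+s}}\eta_k\Bigr]=\sum_{k=2}^{M(s)}\frac{1}{k^{1+2s}\log k}\leq\sum_{k=2}^{M(s)}\frac{1}{k\log k}=\log\log M(s)+O(1),
\]
and the last quantity is bounded by $\log^{(3)}(1/s)(1+o(1))$. Multiplying by $f(s)^2=1/(2\log(1/s)\log^{(3)}(1/s))$ yields an $L^2$-bound of order $1/\log(1/s)$, so convergence in probability is immediate; the substance of the lemma is to upgrade this to almost sure convergence.

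\emph{Subsequence step.} Set $s_n:=\exp(-\exp(n))$, write $T_N(s):=\sum_{k=2}^N(\log k)^{-1/2}k^{-1/2-s}\eta_k$, and let $M_n:=\sup_{s\in[s_{n+1},s_n]}M(s)$, which is $o(e^n)$ by hypothesis, so that $\log\log M_n=O(\log n)$. Since $N\mapsto T_N(s_n)$ is a zero-mean $L^2$-martingale, Doob's $L^2$ maximal inequality gives $\me\max_{2\leq N\leq M_n}|T_N(s_n)|^2\leq 4(\log n+O(1))$. Because $f(s_n)^2\asymp (e^n\log n)^{-1}$, Chebyshev's inequality yields
\[
\mmp\bigl\{f(s_n)\max_{N\leq M_n}|T_N(s_n)|>\varepsilon\bigr\}=O(e^{-n}),
\]
which is summable, and the Borel--Cantelli lemma delivers $f(s_n)\max_{N\leq M_n}|T_N(s_n)|\to 0$ a.s.

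\emph{Filling the intervals.} For $s\in[s_{n+1},s_n]$ the function $f$ is monotone so $f(s)\leq f(s_n)$, and by the definition of $M_n$ one has $|T_{M(s)}(s_n)|\leq\max_{2\leq N\leq M_n}|T_N(s_n)|$, which the subsequence step already controls. It remains to bound the variation of $T_{M(s)}(s)$ in $s$. The elementary estimate $|k^{-s}-k^{-s_n}|\leq|s-s_n|\log k\leq s_n\log k$ gives
\[
\sup_{s\in[s_{n+1},s_n]}|T_{M(s)}(s)-T_{M(s)}(s_n)|\leq s_n\sum_{k=2}^{M_n}\sqrt{(\log k)/k}\,|\eta_k|,
\]
whose expectation is $O(s_n\sqrt{M_n\log M_n})=O(e^{-e^n}e^{n/2}\sqrt n)$. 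Multiplying by $f(s_n)$ leaves a bound of order $e^{-e^n}\sqrt{n/\log n}$, and Markov plus Borel--Cantelli make this error vanish a.s. Combining with the subsequence step concludes the proof.

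\emph{Main obstacle.} The only delicate point is the uniform control over the continuum $s\in[s_{n+1},s_n]$: the discretization must be coarse enough for Borel--Cantelli to succeed (i.e.\ summability of $\sum e^{-n}$), yet fine enough for the Lipschitz-in-$s$ estimate to survive. The choice $s_n=\exp(-\exp(n))$ is tailored to this balance because $|s-s_n|\leq s_n$ then decays double-exponentially, killing the rough bound $s_n\sqrt{M_n\log M_n}$ effortlessly.
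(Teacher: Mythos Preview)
Your proof is correct but takes a genuinely different route from the paper.

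The paper's argument is pathwise and essentially deterministic after one probabilistic input: it invokes the classical LIL for the partial sums $T_n=\eta_1+\cdots+\eta_n$, namely $\max_{k\leq n}|T_k|=O((n\log\log n)^{1/2})$ a.s., and then applies summation (Abel) by parts to the truncated sum. The boundary term at $M(s)$ is $O((\log\log M(s)/\log M(s))^{1/2})=o(1)$, and the remaining integral is bounded by $(\max_{k\leq M(s)}|T_k|)\int_{3/2}^{M(s)}(\log x)^{-1/2}x^{-3/2}\,dx=O((M(s)\log\log M(s))^{1/2})$. Since $M(s)\log\log M(s)=o(\log(1/s)\log^{(3)}(1/s))$ by hypothesis, multiplication by $f(s)$ kills everything. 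No subsequence, no Borel--Cantelli, no interpolation in $s$.

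Your approach instead follows the ``subsequence + fill-in'' template: Doob's $L^2$ maximal inequality along $s_n=\exp(-\exp n)$, then a Lipschitz estimate in $s$ to interpolate. This is perfectly valid, and in fact mirrors the structure of the harder Lemma~3.5 in the paper (where a similar discretize-and-interpolate argument is unavoidable). The trade-off: the paper's proof is shorter and more transparent for this easy piece, exploiting that the LIL bound on $T_n$ already absorbs all the randomness in one stroke. Your method is more robust in spirit---it does not rely on the LIL for $(T_n)$---but pays for that with extra machinery that is overkill here.
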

\begin{proof}
Put $T_0:=0$ and $T_n:=\eta_1+\ldots+\eta_n$ for $n\in\mathbb{N}$. According to the LIL for standard random walks,
\begin{equation}\label{5.3}
|T_n|\leq \max_{k\leq n}|T_k|=O((n\log\log n)^{1/2}),\quad n\to\infty \quad\text{a.s.}
\end{equation}
Observe that
$$\sum_{k=2}^{M(s)}\frac{(\log k)^{-1/2}}{k^{1/2+s}}\eta_k=\int_{(3/2,\,M(s)]}\frac{{\rm d}T_{\lfloor x\rfloor}}{(\log x)^{1/2}x^{1/2+s}}.$$ Integrating by parts we obtain
\begin{multline*}
\int_{(3/2,\, M(s)]}\frac{(\log x)^{-1/2}}{x^{1/2+s}} {\rm d}T_{\lfloor x\rfloor }=\frac{(\log M(s))^{-1/2}T_{M(s)}}{(M(s))^{1/2+s}}\\-\frac{(\log 3/2)^{-1/2}\eta_1}{(3/2)^{1/2+s}}+ 
\int_{3/2}^{M(s)}\frac{((\log x)^{-3/2}/2+(1/2+s)(\log x)^{-1/2})T_{\lfloor x\rfloor}}{x^{3/2+s}}{\rm d}x.
\end{multline*}
Relation \eqref{eq:growth} entails
$\lim_{s\to 0+}(M(s))^s=1$. This in combination with \eqref{5.3} enables us to conclude that,
as $s\to 0+$,
\begin{multline*}
\frac{(\log M(s))^{-1/2}|T_{M(s)}|}{(M(s))^{1/2+s}}~\sim~ \frac{(\log M(s))^{-1/2}|T_{M(s)}|}{(M(s))^{1/2}}=O((\log M(s))^{-1/2}(\log\log M(s))^{1/2}) = o(1).
\end{multline*}
Since $\lim_{s\to 0+}f(s)=0$, the latter ensures that
\begin{equation*}
\lim_{s\to 0+} f(s)\frac{(\log M(s))^{-1/2}|T_{M(s)}|}{(M(s))^{1/2+s}}= 0 \quad\text{a.s.}
\end{equation*}
To complete the proof, it is sufficient to show that $$\lim_{s\to 0+}f(s)\int_{3/2}^{M(s)}\frac{(\log x)^{-1/2}|T_{\lfloor x\rfloor}|}{x^{3/2+s}}{\rm d}x=0\quad\text{a.s.}$$ To this end, write
\begin{multline*}
\int_{3/2}^{M(s)}\frac{(\log x)^{-1/2}|T_{\lfloor x\rfloor}|}{x^{3/2+s}}{\rm d}x\leq \big(\max_{k\leq M(s)}\,|T_k|\big) \int_{3/2}^{M(s)}\frac{(\log x)^{-1/2}}{x^{3/2+s}}{\rm d}x\\=O((M(s)\log\log M(s))^{1/2})O(1)=O((M(s)\log\log M(s))^{1/2}),\quad s\to 0+\quad\text{a.s.}
\end{multline*}
having utilized \eqref{5.3} for the penultimate equality. Since \eqref{eq:growth} entails $$\lim_{s\to 0+}\frac{M(s)\log\log M(s)}{\log 1/s\,\log^{(3)}1/s}=0,$$ the claim follows.
\end{proof}

Put $g(s):=\mathbb{E}\Big(\sum_{k\geq 2}\frac{(\log k)^{-1/2}}{k^{1/2+s}}\eta_k\Big)^2=\sum_{k\geq 2}\frac{(\log k)^{-1}}{k^{1+2s}}$ for $s>0$ and recall that $g(s)\sim \log 1/s$ as $s\to 0+$. For $k\in \mathbb{N}$, $\rho>0$ and $s\in(0, \eee^{-\eee})$, define the event
\begin{equation*}
A_{k,\rho}(s):=\Big\{|\eta_k|>\frac{\rho}{\log\log 1/s}\Big(\frac{k^{1+2s}\log k\, g(s)}{\log^{(3)}1/s}\Big)^{1/2}\Big\}.
\end{equation*}

Next, we demonstrate that the second (remaining) part of the series also vanishes if the variables $\eta_k$ are properly truncated.
\begin{lemma}\label{lemma_3}
For all $\rho>0$,
\begin{equation}\label{lem4.1}
\lim_{s\to 0+}\sum_{k\ge M(s)+1}\frac{(\log k)^{-1/2}}{k^{1/2+s}}|\eta_k|\1_{A_{k,\rho}(s)}=0\quad \text{\rm{a.s.}}
    \end{equation}
    and
    \begin{equation}\label{lem4.2}
        \lim_{s\to 0+}
        \sum_{k\ge M(s)+1}\frac{(\log k)^{-1/2}}{k^{1/2+s}}\mathbb{E}\big(|\eta_k|\1_{A_{k,\rho}(s)}\big)=0.
    \end{equation}
\end{lemma}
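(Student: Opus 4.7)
Set $c(s):=\frac{\rho}{\log\log(1/s)}\bigl(\frac{g(s)}{\log^{(3)}(1/s)}\bigr)^{1/2}$, so that $t_k(s):=c(s)(k^{1+2s}\log k)^{1/2}$ is the threshold entering $A_{k,\rho}(s)=\{|\eta_k|>t_k(s)\}$; since $g(s)\sim\log(1/s)$ one has $c(s)\sim \rho\sqrt{\log(1/s)}/\log\log(1/s)\to+\infty$ as $s\to 0+$. I will handle the two statements by different means: \eqref{lem4.1} will reduce to a crude almost-sure growth bound on $|\eta_k|$ combined with the fact that $c(s)\to\infty$, whereas \eqref{lem4.2} will require a Markov-type truncation bound followed by a Tonelli exchange.

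\textbf{For \eqref{lem4.1}.} The key step is the almost-sure estimate $|\eta_k|\le(k\log k)^{1/2}$ for all $k$ large enough, which follows from the Borel--Cantelli lemma since
$$\sum_{k\ge 2}\mmp\{|\eta_k|^2>k\log k\}\le\sum_{k\ge 2}\mmp\{\eta_1^2>k\}\le\me\eta_1^2<\infty.$$
Combining this with $c(s)\to+\infty$ and $M(s)\to+\infty$, for almost every $\omega$ and all sufficiently small $s>0$ every index $k\ge M(s)+1$ satisfies
$$|\eta_k(\omega)|\le(k\log k)^{1/2}<c(s)(k^{1+2s}\log k)^{1/2}=t_k(s),$$
so $\1_{A_{k,\rho}(s)}(\omega)=0$ for every such $k$. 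Consequently, for $s$ close to $0$ the sum in \eqref{lem4.1} is identically zero almost surely.

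\textbf{For \eqref{lem4.2}.} Starting from the pointwise inequality $|\eta|\1_{\{|\eta|>t\}}\le\eta^2/t$ I will obtain $\me(|\eta_1|\1_{A_{k,\rho}(s)})\le\varphi(t_k(s))/t_k(s)$, with $\varphi(t):=\me(\eta_1^2\1_{\{|\eta_1|>t\}})\to 0$ as $t\to\infty$. The identity $(\log k)^{-1/2}/(k^{1/2+s}t_k(s))=1/(c(s)k^{1+2s}\log k)$ together with Tonelli's theorem then give
\begin{equation*}
\sum_{k\ge M(s)+1}\frac{(\log k)^{-1/2}}{k^{1/2+s}}\me(|\eta_1|\1_{A_{k,\rho}(s)})\le\frac{1}{c(s)}\me\bigl(\eta_1^2 H_s(|\eta_1|)\bigr),
\end{equation*}
where $H_s(x):=\sum_{k\ge M(s)+1}\1_{\{x>t_k(s)\}}/(k^{1+2s}\log k)$. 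Since $t_k(s)$ is increasing in $k$, $H_s(x)=0$ for $x\le t_{M(s)+1}(s)$, and an integral comparison yields $H_s(x)\le\log\log(x/c(s))+O(1)$ otherwise.

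The last and hardest step is to show that $\me(\eta_1^2 H_s(|\eta_1|))/c(s)\to 0$ as $s\to 0+$. Pointwise the integrand tends to $0$ because $t_{M(s)+1}(s)\to+\infty$ annihilates the indicators in $H_s$. The main obstacle will be domination: the crude bound $\varphi(t_{M(s)+1}(s))\,g(s)/c(s)$ is insufficient because $g(s)/c(s)\sim\sqrt{\log(1/s)}\log\log(1/s)\to+\infty$, and $\me(\eta_1^2\log\log|\eta_1|)$ need not be finite under the mere assumption $\me\eta_1^2<\infty$. I plan to overcome this by splitting the expectation at a carefully chosen cutoff $T=T(s)\to+\infty$: on $\{|\eta_1|\le T(s)\}$ the quantity $H_s(|\eta_1|)/c(s)$ is controlled uniformly by $H_s(T(s))/c(s)$, which tends to $0$ because the $H_s$-indicators vanish pointwise for small $s$; on $\{|\eta_1|>T(s)\}$ the residual tail $\varphi(T(s))$ is made small enough to tame the growth of $g(s)/c(s)$, by choosing $T(s)$ such that $\varphi(T(s))g(s)/c(s)\to 0$, e.g.\ $T(s)$ a suitable power of $c(s)$ combined with the monotone convergence $\varphi(t)\to 0$.
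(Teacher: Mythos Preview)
Your argument for \eqref{lem4.1} is correct and coincides with the paper's: both rest on $\sup_{k\ge 1} k^{-1/2}|\eta_k|<\infty$ a.s.\ (your version, $|\eta_k|\le (k\log k)^{1/2}$ for large $k$, is a mild refinement of this) combined with $c(s)\to\infty$, which forces every indicator $\1_{A_{k,\rho}(s)}$ with $k\ge M(s)+1$ to vanish once $s$ is small.

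For \eqref{lem4.2} there is a genuine gap. The step $|\eta|\1_{\{|\eta|>t\}}\le (\eta^2/t)\1_{\{|\eta|>t\}}$ is too lossy: the resulting upper bound $c(s)^{-1}\me\bigl(\eta^2 H_s(|\eta|)\bigr)$ need not tend to $0$ under the bare hypothesis $\me\eta^2<\infty$. For a concrete obstruction take $\eta$ with $\varphi(t)=\me(\eta^2\1_{\{|\eta|>t\}})$ comparable to $1/\log\log\log t$ for large $t$ (this is compatible with $\me\eta^2<\infty$, which imposes no decay rate on $\varphi$). Then $\varphi(t_k(s))$ is comparable to $1/\log^{(3)}k$ over the range of $k$ that carries the mass of $g(s)$, and an integral comparison shows that $c(s)^{-1}\sum_{k}\varphi(t_k(s))/(k^{1+2s}\log k)$ is of order $(\log 1/s)^{1/2}\to\infty$. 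Your proposed splitting cannot repair this: with such a $\varphi$, making $\varphi(T(s))\,g(s)/c(s)\to 0$ forces $\log^{(3)}T(s)$ to dominate $g(s)/c(s)$, while $H_s(T(s))/c(s)\to 0$ forces $\log\log T(s)=o(c(s))$; since $g(s)/c(s)$ grows faster than $c(s)$, no $T(s)$ meets both constraints. (The remark that ``the $H_s$-indicators vanish pointwise'' applies only at a \emph{fixed} argument, not at $T(s)\to\infty$.)

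The paper sidesteps the problem by never passing to $\eta^2/t$. Using $(\log k)^{-1/2}k^{-1/2-s}\le k^{-1/2}$ (from $k^{2s}\log k\ge 1$) together with $A_{k,\rho}(s)\subset\{k<|\eta|^2/c(s)^2\}$, Tonelli gives directly
\[
\sum_{k\ge M(s)+1}\frac{(\log k)^{-1/2}}{k^{1/2+s}}\,\me\bigl(|\eta|\1_{A_{k,\rho}(s)}\bigr)\ \le\ \me\Big(|\eta|\sum_{1\le k\le |\eta|^2/c(s)^2}k^{-1/2}\Big)\ \le\ \frac{2\,\me\eta^2}{c(s)}\ \to\ 0,
\]
because $\sum_{k\le N}k^{-1/2}\le 2N^{1/2}$; the square root converts the remaining $|\eta|$ into $\eta^2$ with no iterated-logarithm penalty.
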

\begin{proof}
Put $h(s):=(\log\log 1/s)(\log^{(3)}1/s)^{1/2}$. Observe that, for $k\geq 3$ and $s\geq 0$, $k^{2s} 
\log k\geq 1$. Hence, for $s\in (0, \eee^{-\eee})$,
\begin{equation*}
\sum_{k\ge M(s)+1}\frac{(\log k)^{-1/2}}{k^{1/2+s}}|\eta_k|\1_{A_{k,\rho}(s)}\le \sum_{k\ge M(s)+1}\frac{|\eta_k|}{k^{1/2}}\1_{\{k^{-1/2}|\eta_k|>\rho(g(s))^{1/2} (h(s))^{-1}\}}\quad\text {{\rm a.s.}}
\end{equation*}
The assumption $\mathbb{E}[\eta^2]<\infty$ entails $\lim_{k\to\infty}k^{-1/2}|\eta_k|=0$ a.s. and thereupon $\sup_{k\ge 1}(k^{-1/2}|\eta_k|)<\infty$ a.s. Since $\lim_{s\to 0+}((g(s)
)^{1/2} (h(s))^{-1})=+\infty$, we infer
\begin{equation*}
\1_{\{k^{-1/2}|\eta_k|>\rho(g(s))^{1/2} (h(s))^{-1}\}}\le \1_{\{\sup_{k\ge 1}\,(k^{-1/2}|\eta_k|)>\rho(g(s))^{1/2} (h(s))^{-1}\}}=0
\end{equation*}
a.s.\ for small $s$. We have proved that the sum in \eqref{lem4.1} is equal to $0$ a.s.\ for small enough $s$.

Relation \eqref{lem4.2} is justified as follows:
\begin{multline*}
\sum_{k\ge M(s)+1}\frac{(\log k)^{-1/2}}{k^{1/2+s}}\mathbb{E}\big(|\eta_k|\1_{A_{k,\rho}(s)}\big) \le  \sum_{k\ge M(s)+1}k^{-1/2}\mathbb{E}\big(|\eta|\1_{\{\rho^{-1}(g(s))^{-1/2} h(s) 
|\eta|>k^{1/2}\}}\big)\\ \le \mathbb{E}\Bigg(|\eta|\sum_{k=1}^{\lfloor \rho^{-2}(g(s)
)^{-1}(h(s))^2\eta^2\rfloor}k^{-1/2}\Bigg)  \le 2\rho^{-1}\mathbb{E}\eta^2(g(s)
)^{-1/2} h(s)\\=2\rho^{-1}(g(s))^{-1/2}h(s)~\to~0,\quad s\to 0+.
\end{multline*}
The proof of Lemma \ref{lemma_3} is complete.
\end{proof}
In what follows, $(A_{k,\rho}(s))^c$ denotes the complement of $A_{k,\rho}(s)$, that is, for $k\in\mathbb{N}, \rho>0$ and $s\in(0, \eee^{-\eee})$,
\begin{equation*}
(A_{k,\rho}(s))^c:=\Big\{|\eta_k|\le\frac{\rho}{\log\log1/s}\Big(\frac{k^{1+2s}\log k\, g(s)}{\log^{(3)}1/s}\Big)^{1/2}\Big\}.
\end{equation*}
Our next result is concerned with the fragment of the series giving the principal contribution. However, this is a light version of what we really need, for the convergence here is only along a sequence.
\begin{lemma}\label{lemma4}
Fix any $\gamma\in (0,(\sqrt{5}-1)/2)$,
pick any $\rho=\rho(\gamma)$ satisfying
\begin{equation}\label{5.7}
(1-\gamma)(1+\gamma)^2(2-\exp(2\sqrt{2}(1+\gamma)\rho))>1
\end{equation}
and put $s_n:=\exp(-\exp(n^{1-\gamma}))$ for $n\in\mathbb{N}$. Then
\begin{equation*}
\limsup_{n\to\infty} f(s_n)\sum_{k\ge M(s_n)+1}\frac{(\log k)^{-1/2}\widetilde{\eta}_{k,\rho}(s_n)}{k^{1/2+s_n}}\le 1+\gamma \quad\text{{\rm a.s.},}
\end{equation*}
where $\widetilde{\eta}_{k,\rho}(s):=\eta_k \1_{(A_{k,\rho}(s))^c}-\mathbb{E}\big(\eta_k\1_{(A_{k,\rho}(s))^c}\big)$ for $k\in\mathbb{N}$ and $s\in(0, \eee^{-\eee})$.
\end{lemma}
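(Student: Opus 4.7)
The plan is to prove this along-sequence upper bound by an exponential Chebyshev (Chernoff) estimate followed by the first Borel--Cantelli lemma. The choice $s_n=\exp(-\exp(n^{1-\gamma}))$ is tailored so that $\log^{(3)}(1/s_n)=(1-\gamma)\log n$, which is exactly what will translate a tail of the form $\exp(-\kappa\log^{(3)}(1/s_n))$ into a summable $n^{-\kappa(1-\gamma)}$ series.

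The starting point is the uniform bound furnished by the truncation: by the definition of $(A_{k,\rho}(s))^c$ and the triangle inequality accounting for the centering,
$$|a_k\widetilde{\eta}_{k,\rho}(s)|\le c(s):=\frac{2\rho}{\log\log 1/s}\Big(\frac{g(s)}{\log^{(3)}1/s}\Big)^{1/2},\qquad a_k:=\frac{(\log k)^{-1/2}}{k^{1/2+s}},$$
independent of $k$, while $\sum_{k\ge 2}a_k^2{\rm Var}(\widetilde{\eta}_{k,\rho}(s))\le g(s)$. Using the standard exponential moment bound $\me e^{tX}\le\exp(\tfrac12 t^2\me X^2 e^{tb})$ valid for a centered $X$ with $|X|\le b$, and exploiting independence of the $\widetilde{\eta}_{k,\rho}(s)$, Markov's inequality yields, for any $t>0$,
$$\mmp\Big(f(s)\sum_{k\ge M(s)+1}a_k\widetilde{\eta}_{k,\rho}(s)>1+\gamma\Big)\le\exp\Big(-\frac{(1+\gamma)t}{f(s)}+\frac{t^2 g(s)}{2}e^{tc(s)}\Big).$$

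I would then choose $t=t(s):=(1+\gamma)/(f(s)g(s))$, which turns the right-hand side into $\exp\bigl(-\tfrac{(1+\gamma)^2}{2f(s)^2g(s)}(2-e^{t(s)c(s)})\bigr)$. A direct calculation with the definitions of $f$, $g$, $c$ shows both that (i) $(1+\gamma)^2/(2f(s)^2g(s))=(1+\gamma)^2\log^{(3)}(1/s)(1+o(1))$ as $s\to 0+$ (since $g(s)\sim\log 1/s$), and (ii) for all sufficiently small $s$ one has $t(s)c(s)\le 2\sqrt{2}(1+\gamma)\rho$, so that $2-e^{t(s)c(s)}\ge 2-\exp(2\sqrt{2}(1+\gamma)\rho)$. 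Substituting $s=s_n$ and using $\log^{(3)}(1/s_n)=(1-\gamma)\log n$, the tail probability is bounded by $n^{-\kappa(1+o(1))}$ with $\kappa:=(1-\gamma)(1+\gamma)^2(2-\exp(2\sqrt{2}(1+\gamma)\rho))$; hypothesis \eqref{5.7} ensures $\kappa>1$, the series converges, and Borel--Cantelli gives the stated limsup.

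The main technical obstacle will be step (ii): the asymptotic relation $g(s)\sim\log 1/s$ needs to be sharpened into a one-sided inequality valid uniformly for all small $s$, strong enough to guarantee $t(s)c(s)\le 2\sqrt{2}(1+\gamma)\rho$ throughout the summation range, and similarly the $o(1)$ error in step (i) must be controlled so as to leave the exponent strictly greater than $1$. The strict inequality in \eqref{5.7} rather than equality is precisely what provides the cushion needed to absorb these pre-limit corrections.
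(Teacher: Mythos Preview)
Your proposal is correct and follows essentially the same route as the paper: an exponential Chebyshev bound based on $e^x\le 1+x+\tfrac{x^2}{2}e^{|x|}$, the truncation bound $|a_k\widetilde\eta_{k,\rho}(s)|\le c(s)$, the choice of the tilting parameter to optimise the exponent, and Borel--Cantelli using $\log^{(3)}(1/s_n)=(1-\gamma)\log n$. The paper sidesteps the two technical points you flag at the end by replacing $f$ with $f^\ast(s):=(2g(s)\log^{(3)}1/s)^{-1/2}\sim f(s)$ before starting; with $f^\ast$ in place of $f$ the quantities $(1+\gamma)^2/(2(f^\ast)^2 g)$ and $t(s)c(s)$ become \emph{exactly} $(1+\gamma)^2\log^{(3)}(1/s)$ and (after dropping the harmless $(\log\log 1/s)^{-1}$ factor from $c$) $2\sqrt{2}(1+\gamma)\rho$, so no one-sided sharpening of $g(s)\sim\log 1/s$ is needed and the cushion from the strict inequality in \eqref{5.7} is not actually spent.
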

\begin{proof}
Since $(1-\gamma)(1+\gamma)^2>1$ whenever $\gamma\in (0, (\sqrt{5}-1)/2)$, $\rho$ satisfying \eqref{5.7} does indeed exist.

Put $f^\ast(s):=(2g(s)\,\log^{(3)}1/s)^{-1/2}$ for $s\in (0, \eee^{-\eee})$. Since $f^\ast(s)\sim f(s)$ as $s\to 0+$, we can and do prove the result, with $f^\ast$ replacing $f$. Put
\begin{equation*}
X(s)=f^\ast(s)\sum_{k\ge M(s)+1}\frac{(\log k)^{-1/2}\widetilde{\eta}_{k,\rho}(s)}{k^{1/2+s}},\quad s\in(0, \eee^{-\eee}).
\end{equation*}
Using $\eee^x\le 1+x+(x^2/2)\eee^{|x|}$ for $x\in\mathbb{R}$ and $\mathbb{E}[\widetilde{\eta}_{k,\rho}(s)]=0$ we deduce, for $u\in\mathbb{R}$,
\begin{multline*}
\mathbb{E}[\eee^{uX(s)}]=\prod_{k\ge M(s)+1}\mathbb{E}\Big[\exp\Big(u f^\ast(s)
\frac{(\log k)^{-1/2}\widetilde{\eta}_{k,\rho}(s)}{k^{1/2+s}}\Big)\Big] \\ \le \prod_{k\ge M(s)+1}\Big(1+\frac{u^2(f^\ast(s)
)^2}{2}\frac{(\log k)^{-1}}{k^{1+2s}}\mathbb{E}\Big[(\widetilde{\eta}_{k,\rho}(s))^2\exp\Big(|u|f^\ast(s)
\frac{(\log k)^{-1/2}|\widetilde{\eta}_{k,\rho}(s)|}{k^{1/2+s}}\Big)\Big]\Big).
\end{multline*}
The inequality
\begin{multline}\label{5.8}
|\widetilde{\eta}_{k,\rho}(s)|\le|\eta_k|\1_{(A_{k,\rho}(s))^c}+\mathbb{E}\big(|\eta_k|\1_{(A_{k,\rho}(s))^c}\big)\\\le \frac{2\rho k^{1/2+s}}{\log\log 1/s}\Big(\frac{\log k\, g(s)}{\log^{(3)}1/s}\Big)^{1/2}\le 2\rho k^{1/2+s} \Big(\frac{\log k\, g(s)}{\log^{(3)}1/s}\Big)^{1/2}\quad\text{a.s.,}
\end{multline}
which is valid for integer $k\geq 2$ and $s\in(0, \eee^{-\eee})$, implies that
\begin{equation*}
\exp\Big(|u|f^\ast(s)\frac{(\log k)^{-1/2}|\widetilde{\eta}_{k,\rho}(s)|}{k^{1/2+s}}\Big)\le\exp\Big(\frac{\sqrt{2}\rho|u|}{\log^{(3)}1/s}\Big)\quad\text{a.s.}
\end{equation*}
Together with the inequalities $\mathbb{E}[(\widetilde{\eta}_{k,\rho}(s))^2]\le 1$ and $1+x\le \eee^x$ for $x\in\mathbb{R}$ this gives,
for $u\in\mathbb{R}$,
\begin{multline}\label{5.9}
\mathbb{E}[\eee^{uX(s)}]\le\prod_{k\ge M(s)+1}\exp\Big(\frac{u^2(f^\ast(s))^2}{2}\frac{(\log k)^{-1}}{k^{1+2s}}\exp\Big(\frac{\sqrt{2}\rho|u|}{\log^{(3)}1/s}\Big)\Big)\\
\le\exp\Big(\frac{u^2}{4 \log^{(3)}1/s}\exp\Big(\frac{\sqrt{2}\rho|u|}{\log^{(3)}1/s}\Big)\Big).
\end{multline}
An application of Markov's inequality yields, for $u\ge 0$,
\begin{multline*}
\mathbb{P}\{X(s_n)>1+\gamma\}\le \eee^{-(1+\gamma)u} \mathbb{E}[\eee^{u X(s_n)}]\le \exp\Big(-(1+\gamma)u+\frac{u^2}{4 \log^{(3)}1/s_n}\exp\Big(\frac{\sqrt{2}\rho u}{ \log^{(3)}1/s_n}\Big)\Big).
\end{multline*}
Putting $u=2(1+\gamma)\log^{(3)}1/s_n$ we conclude that
\begin{multline*}
\mathbb{P}\{X(s_n)>1+\gamma\}\le \exp(-(1+\gamma)^2(2-\exp(2\sqrt{2}(1+\gamma)\rho)) \log^{(3)} 1/s_n)\\ =\frac{1}{n^{(1-\gamma)(1+\gamma)^2(2-\exp(2\sqrt{2}(1+\gamma)\rho))}}.
\end{multline*}
Thus, in view of \eqref{5.7}, $\sum_{n\ge 1}\mathbb{P}\{X(s_n)>1+\gamma\}<\infty$, and invoking the direct part of the Borel-Cantelli lemma completes the proof of Lemma \ref{lemma4}.
\end{proof}

Now we present our final, and the most involved, preparatory result. It shows that the convergence along a sequence discussed in Lemma \ref{lemma4} can be lifted to the full convergence along the real numbers.
\begin{lemma}\label{lemma5}
Let $(s_n)_{n\in\mathbb{N}}$ be as defined in Lemma \ref{lemma4}, where $\gamma\in (0, 1/2)$, and $M(s)=\lfloor \log 1/s/\log\log 1/s\rfloor$ for $s\in (0,\eee^{-\eee})$. For $s\in [s_{n+1},s_n]$, the following limit relation holds
\begin{equation*}
\lim_{n\to\infty}f(s)\Big(\sum_{k\ge M(s)+1}\frac{(\log k)^{-1/2}}{k^{1/2+s}}\eta_k -\sum_{k\ge M(s_{n+1})+1}\frac{(\log k)^{-1/2}}{k^{1/2+s_{n+1}}}\eta_k\Big)=0\quad\text{\rm a.s.}
\end{equation*}
\end{lemma}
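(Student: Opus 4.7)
My plan is to decompose $D_n(s) = F_n(s) + E_n(s)$, where
\begin{equation*}
F_n(s) := \sum_{M(s) < k \le M(s_{n+1})} \frac{(\log k)^{-1/2}}{k^{1/2+s_{n+1}}} \eta_k, \qquad E_n(s) := \sum_{k > M(s)} \frac{(\log k)^{-1/2}}{k^{1/2}} \big(k^{-s} - k^{-s_{n+1}}\big) \eta_k
\end{equation*}
account, respectively, for the change of the lower summation index and for the change of the exponent; one checks directly that their sum equals $D_n(s)$. The goal becomes $\sup_{s \in [s_{n+1}, s_n]} f(s)\big(|F_n(s)| + |E_n(s)|\big) \to 0$ almost surely, which I would approach by Borel--Cantelli on each piece.

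The term $F_n$ is the easier one. Since its only $s$-dependence is through the lower summation index $M(s) \in [M(s_n), M(s_{n+1})]$, Kolmogorov's maximal inequality gives
\begin{equation*}
\mathbb{P}\Big(\sup_{s \in [s_{n+1}, s_n]} |F_n(s)| > \epsilon\Big) \le \frac{1}{\epsilon^2} \sum_{M(s_n) < k \le M(s_{n+1})} \frac{1}{k^{1+2s_{n+1}} \log k}.
\end{equation*}
From $\log(1/s_n) = \exp(n^{1-\gamma})$ one extracts $\log\log M(s_n) = (1-\gamma)\log n + O(1)$, so the right-hand sum is $O(1/n)$. Combined with $f(s_n)^2 = O(1/(\log n \cdot \exp(n^{1-\gamma})))$, the resulting bound is summable in $n$, and Borel--Cantelli disposes of $F_n$.

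The term $E_n$ is the main obstacle. I would replicate the truncation-plus-MGF strategy of Lemma~\ref{lemma4}: truncate each $\eta_k$ to $\widetilde{\eta}_{k,\rho}(s_{n+1})$ for a small parameter $\rho$, discard the tails by arguments paralleling Lemma~\ref{lemma_3}, and bound the moment generating function of the truncated $E_n^{\mathrm{tr}}(s)$ via $e^x \le 1 + x + (x^2/2)e^{|x|}$, just as in the derivation of \eqref{5.9}. The key deterministic estimate
\begin{equation*}
(k^{-s} - k^{-s_{n+1}})^2 \le k^{-2 s_{n+1}} \min\!\big(4,\, (s - s_{n+1})^2 (\log k)^2\big),
\end{equation*}
applied to the variance sum and split at $\log k = 2/(s - s_{n+1})$, yields $\mathrm{Var}(E_n^{\mathrm{tr}}(s)) = O(n^{-\gamma} \log(1/s_n))$ at the worst value $s = s_n$, hence $f(s_n)^2 \mathrm{Var}(E_n^{\mathrm{tr}}(s)) = O(n^{-\gamma}/\log n)$. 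The Chernoff bound at $u = A \log n$ with $A$ chosen sufficiently large (depending on $\epsilon$) then produces a pointwise tail $\mathbb{P}(f(s_{n+1}) |E_n^{\mathrm{tr}}(s)| > \epsilon) \le n^{-A\epsilon + o(1)}$, summable in $n$ for every $\epsilon > 0$. The subtlest point, which I expect to be the hard part, is uniformity in $s \in [s_{n+1}, s_n]$: the same variance calculation shows that $\mathrm{Var}(E_n(s) - E_n(t))$ grows only logarithmically in $|s-t|/\min(s,t)$, so the supremum can be controlled by chaining on a geometric hierarchy of nets in $s$, with each level's increment handled by an analogue of the MGF bound above; the growth of the net size must be balanced against the slack $A$ in the $n^{-A\epsilon + o(1)}$ estimate, and this balance is the heart of the argument.
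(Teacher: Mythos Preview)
Your outline is structurally the same as the paper's argument. The paper splits the difference as $I_{n,1}(s)+I_{n,2}(s)$, where $I_{n,1}$ is the finite sum over $M(s)<k\le M(s_{n+1})$ with exponent $s$ (not $s_{n+1}$) and $I_{n,2}$ is the change-of-exponent series over the fixed range $k>M(s_{n+1})$. Your $(F_n,E_n)$ is a minor variant with the roles swapped; note that your $E_n$ carries an $s$-dependent lower index $M(s)$, which the paper's choice avoids and which would otherwise add a nuisance term inside the chaining. For the finite piece the paper does not use Kolmogorov's inequality but rather summation by parts and the LIL bound $|T_k|=O((k\log\log k)^{1/2})$, obtaining the stronger conclusion $\sup_{s}|I_{n,1}(s)|\to 0$ a.s.\ (without the factor $f(s)$). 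Your second-moment route is fine too. For the main piece the paper also truncates (at the simpler level $|\eta_k|\le k^{1/2}\log n$, not $\widetilde\eta_{k,\rho}$), passes to the variable $v=1/\log(1/s)$, and runs a dyadic chaining on $[v_{n+1},v_n]$; your ``geometric hierarchy in $s$'' is a dyadic partition of $\log s$, and since the increment variance is $\lesssim \log(s_2/s_1)=|\log s_2-\log s_1|$ this is equivalent, so the parametrization issue is implicitly handled.

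The one point where your sketch would fail as written is the proposed balance ``net size against the slack $A$ in the $n^{-A\varepsilon+o(1)}$ estimate''. A \emph{fixed} Chernoff parameter $u=A\log n$ gives, at chaining level $j$, a tail of order $\exp\bigl(-A\varepsilon(j{+}1)2^{-j/2}\log n\bigr)$, which is overwhelmed by the $2^j$ increments once $2^{j/2}\gtrsim\log n$; no choice of constant $A$ rescues this. The paper instead lets $u=u_j$ scale like $2^{j/2}$ (specifically $u_j=\varepsilon\,2^{j/2}f(s_n)^{-1}v_{n+1}^{2}/(v_n-v_{n+1})$), which produces a level-$j$ tail $\exp(-\varepsilon^{2}(j{+}1)k_n)$ with $k_n\sim 2n^{\gamma}\log n$, and the sum over $j$ telescopes to $O(e^{-\varepsilon^{2}k_n/4})$. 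A separate page is spent showing that the exponential correction $e^{C_j(u_j,n)}$ in the MGF stays bounded (by $3/2$); this step uses the cruder truncation $k^{1/2}\log n$ and the hypothesis $\gamma<1/2$, and would need to be re-checked with your truncation at $\widetilde\eta_{k,\rho}(s_{n+1})$.
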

\begin{proof}
Using the fact that $M$ is a nonincreasing function for the arguments close to $0$, write, for $s\in [s_{n+1},s_n]$,
\begin{multline*}
\sum_{k\ge M(s)+1}\frac{(\log k)^{-1/2}\eta_k}{k^{1/2+s}}-\sum_{k\ge M(s_{n+1})+1}\frac{(\log k)^{-1/2}\eta_k}{k^{1/2+s_{n+1}}}=\sum_{k= M(s)+1}^{M(s_{n+1})}\frac{(\log k)^{-1/2}\eta_k }{k^{1/2+s}}\\+\sum_{k\ge M(s_{n+1})+1}(\log k)^{-1/2}\Big(\frac{1}{k^{1/2+s}}-\frac{1}{k^{1/2+s_{n+1}}}\Big)\eta_k=:I_{n,1}(s)+I_{n,2}(s).
\end{multline*}

\noindent {\sc Analysis of $I_{n,1}(s)$.} Actually, we shall prove that $\lim_{n\to\infty}\sup_{s\in[s_{n+1},\,s_n]}|I_{n,1}(s)|=0$ a.s. This relation is even more than we need because $\lim_{s\to 0+} f(s)=0$. We obtain with the help of summation by parts
\begin{multline*}
I_{n,1}(s)=\frac{(\log M(s_{n+1}))^{-1/2}T_{M(s_{n+1})}}{(M(s_{n+1}))^{1/2+s}}-\frac{(\log (M(s)+1))^{-1/2}T_{M(s)}}{(M(s)+1)^{1/2+s}}\\ +\sum_{k=M(s)+1}^{M(s_{n+1})-1}\Big(\frac{(\log k)^{-1/2}}{k^{1/2+s}}-\frac{(\log (k+1))^{-1/2}}{(k+1)^{1/2+s}}\Big)T_k,
\end{multline*}
where, as before, $T_n=\eta_1+\ldots+\eta_n$ for $n\in\mathbb{N}$. Invoking formula \eqref{5.3} and $\lim_{n\to\infty}(M(s_{n+1}))^{s_{n+1}}=1$ we obtain
\begin{multline*}
\frac{(\log M(s_{n+1}))^{-1/2}|T_{M(s_{n+1})}|}{(M(s_{n+1}))^{1/2+s}}\le \frac{(\log M(s_{n+1}))^{-1/2}|T_{M(s_{n+1})}|}{(M(s_{n+1}))^{1/2+s_{n+1}}}~\sim~ \frac{(\log M(s_{n+1}))^{-1/2}|T_{M(s_{n+1})}|}{(M(s_{n+1}))^{1/2}}
\\=O((\log M(s_{n+1}))^{-1/2}(\log\log M(s_{n+1}))^{1/2})\to 0,\quad n\to\infty\quad\text{\rm a.s.}
\end{multline*}
By a similar argument we conclude that
\begin{equation*}
\lim_{n\to\infty}\sup_{s\in[s_{n+1},\,s_n]}\frac{(\log (M(s)+1))^{-1/2}|T_{M(s)}|}{(M(s)+1)^{1/2+s}}=0\quad\text{\rm a.s.}
\end{equation*}
Further, for $s\in [s_{n+1}, s_n]$ and large $n$,
\begin{multline*}
\Bigg|\sum_{k=M(s)+1}^{M(s_{n+1})-1}\Bigg(\frac{(\log k)^{-1/2}}{k^{1/2+s}}-\frac{(\log (k+1))^{-1/2}}{(k+1)^{1/2+s}}\Bigg)T_k\Bigg|\\ \le
\sum_{k=M(s)+1}^{M(s_{n+1})-1}\Bigg(\frac{(\log k)^{-1/2}}{k^{1/2+s}}-\frac{(\log (k+1))^{-1/2}}{(k+1)^{1/2+s}}\Bigg)|T_k|
\\ \le
\big(\sup_{j\le M(s_{n+1})}|T_j|\big)\sum_{k=M(s)+1}^{M(s_{n+1})-1}\Bigg(\frac{(\log k)^{-1/2}}{k^{1/2+s}}-\frac{(\log (k+1))^{-1/2}}{(k+1)^{1/2+s}}\Bigg)
\\ \le
\big(\sup_{j\le M(s_{n+1})}|T_j|\big)\frac{(\log M(s))^{-1/2}}{(M(s))^{1/2+s}}
\le \big(\sup_{j\le M(s_{n+1})}|T_j|\big)\frac{(\log M(s_n))^{-1/2}}{(M(s_n))^{1/2+s_{n+1}}}
\\ =
O((\log M(s_{n+1}))^{-1/2}(\log\log M(s_{n+1}))^{1/2})\to 0,\quad n\to\infty\quad\text{\rm a.s.}
    \end{multline*}
We have used \eqref{5.3}, $\lim_{n\to\infty}(M(s_{n+1})/M(s_n))=1$ and $\lim_{n\to\infty}(M(s_n))^{s_{n+1}}=1$ for the equality.

\noindent {\sc  Analysis of $I_{n,2}(s)$.} One can show that
\begin{equation}\label{lem4.11}
\lim_{n\to\infty}\sup_{s\geq s_{n+1}}\sum_{k\ge M(s_{n+1})+1}(\log k)^{-1/2}\Big(\frac{1}{k^{1/2+s_{n+1}}}-\frac{1}{k^{1/2+s}}\Big)|\eta_k|\1_{\{|\eta_k|>k^{1/2}\log n\}}=0\quad \text{\rm{a.s.}}
\end{equation}
and
\begin{equation}\label{lem4.21}
\lim_{n\to\infty}\sup_{s\geq s_{n+1}}\sum_{k\ge M(s_{n+1})+1}(\log k)^{-1/2}\Big(\frac{1}{k^{1/2+s_{n+1}}}-\frac{1}{k^{1/2+s}}\Big)\mathbb{E}\big[|\eta_k|\1_{\{|\eta_k|>k^{1/2}\log n\}}\big]=0.
\end{equation}
For instance, relation \eqref{lem4.11} follows from $\sup_{k\ge 1}(k^{-1/2}|\eta_k|)<\infty$ a.s. and
\begin{multline*}
\sum_{k\ge M(s_{n+1})+1}(\log k)^{-1/2}\Big(\frac{1}{k^{1/2+s_{n+1}}}-\frac{1}{k^{1/2+s}}\Big)|\eta_k|\1_{\{|\eta_k|>k^{1/2}\log n\}}\\\leq \sum_{k\ge M(s_{n+1})+1}\frac{|\eta_k|}{k^{1/2}}\1_{\{k^{-1/2}|\eta_k|>\log n\}}\quad\text {{\rm a.s.}}
\end{multline*}
The summands on the right-hand side are equal to $0$ for large enough $n$. Here, we have used $k^{2s_{n+1}}\log k\geq 1$, for $k\geq 3$ and $n\in\mn$. More details can be found in the proof of Lemma \ref{lemma_3}.

Put $\widehat{\eta}_k(n):=\eta_k\1_{\{|\eta_k|\leq k^{1/2}\log n\}}-\me \big[\eta_k\1_{\{|\eta_k|\leq k^{1/2}\log n\}}\big]$ for $k,n\in\mn$. For $n\in\mathbb{N}$ and small positive $u$, put
\begin{equation*}
Y^\ast_n(u):=\sum_{k\ge M(s_{n+1})+1}\frac{(\log k)^{-1/2} \widehat{\eta}_k(n)}{k^{1/2+u}}.
\end{equation*}
In view of \eqref{lem4.11} and \eqref{lem4.21}, it suffices to demonstrate that, for each $s\in[s_{n+1}, s_n]$,
\begin{equation*}
\lim_{n\to\infty}f(s_n)(Y^\ast_n(s)-Y^\ast_n(s_{n+1}))=0\quad\text{\rm a.s.}
\end{equation*}
By a technical reason to be explained below, we shall show that, for each $v\in [v_{n+1}, v_n]$,
\begin{equation}\label{5.11}
\lim_{n\to\infty}f(s_n)(Y_n(v)-Y_n(v_{n+1}))=0\quad\text{\rm a.s.},
\end{equation}
where $Y_n(v):=Y_n^\ast(\exp(-1/v))$, $v_n:=1/(\log 1/s_n)=\exp(-n^{1-\gamma})$ for $n\in\mn$ and $v>0$.

For $j\in\mathbb{N}_0$ and $n\in\mathbb{N}$, put
\begin{equation*}
F_j(n):=\{t_{j,\,m}(n):=v_{n+1}+2^{-j}m(v_n-v_{n+1}):0\le m\le 2^j\}.
\end{equation*}
Note that $F_j(n)\subseteq F_{j+1}(n)$ and put $F(n):=\bigcup_{j\ge 0}F_j(n)$. The set $F(n)$ is dense in the interval $[v_{n+1},v_n]$. For any $u\in[v_{n+1},v_n]$, put
\begin{equation*}
u_j:=\max\{v\in F_j(n): v\le u\}=v_{n+1}+2^{-j}(v_n-v_{n+1})\Big\lfloor\frac{2^j(u-v_{n+1})}{v_n-v_{n+1}}\Big\rfloor.
\end{equation*}
Then $\lim_{j\to\infty} u_j=u$ (we omit the dependence on $n$ in the notation). An important observation is that either $u_{j-1}=u_j$ or $u_{j-1}=u_j-2^{-j}(v_n-v_{n+1})$. Consequently, $u_j=t_{j,m}$ for some $0\le m \le 2^j$, which implies that either $u_{j-1}=t_{j,\,m}$ or $u_{j-1}=t_{j,\,m-1}$. Since $Y_n$ is a.s. continuous on $[v_{n+1}, v_n]$, we obtain
    \begin{multline*}
        |Y_n(u)-Y_n(v_{n+1})|=\lim_{l\to\infty}|Y_n(u_l)-Y_n(v_{n+1})|\\ =\lim_{l\to\infty}\Big|\sum_{j=1}^{l}(Y_n(u_j)-Y_n(u_{j-1}))+Y_n(u_0)-Y_n(v_{n+1})\Big|\\ \le \lim_{l\to\infty}\sum_{j=0}^l\max_{1\le m \le 2^j}|Y_n(t_{j,\,m})-Y_n(t_{j,\,m-1})|=\sum_{j\ge 0}\max_{1\le m \le 2^j}|Y_n(t_{j,\,m})-Y_n(t_{j,\,m-1})|.
    \end{multline*}
Thus, our purpose is to prove that, for all $\varepsilon>0$ and sufficiently large $n_0\in\mathbb{N}$,
\begin{equation*}
\sum_{n\ge n_0}\mathbb{P}\Big\{\sum_{j\ge 0}\max_{1\le m \le 2^j}f(s_n)|Y_n(t_{j,\,m})-Y_n(t_{j,\,m-1})|>\varepsilon\Big\}<\infty.
\end{equation*}
Put $a_j:=(j+1)2^{-j/2}$ for $j\in\mathbb{N}_0$. In view of $\sum_{j\ge 0}a_j<\infty$, it suffices to show that, for all $\varepsilon>0$,
\begin{equation}\label{5.12}
\sum_{n\ge n_0}\sum_{j\ge 0}\mathbb{P}\Big\{\max_{1\le m \le 2^j}f(s_n)|Y_n(t_{j,\,m})-Y_n(t_{j,\,m-1})|>\varepsilon a_j \Big\}<\infty.
\end{equation}

Now we proceed similarly to the proof of Lemma \ref{lemma4} and refer to that proof regarding any missing details. Write, for $u\in\mathbb{R}$ and sufficiently large $n$,
\begin{multline*}
\me\big[\exp\big(\pm u(Y_n(t_{j,\,m})-Y_n(t_{j,\,m-1}))\big)\big]\\=\mathbb{E}\Big[\exp\Big(\pm u \sum_{k\ge M(s_{n+1})+1}\frac{1}{(k\log k)^{1/2}}\Big(\frac{1}{k^{\exp(-1/t_{j,\,m})}}-\frac{1}{k^{\exp(-1/t_{j,\,m-1})}}\Big)\widehat{\eta}_k(n) \Big)\Big] \\ \le \prod_{k\ge M(s_{n+1})+1}\Big( 1+\frac{u^2}{2}\frac{1}{k\log k}\Big(\frac{1}{k^{\exp(-1/t_{j,\,m})}}-\frac{1}{k^{\exp(-1/t_{j,\,m-1})}}\Big)^2\\\times\mathbb{E}\Big[(\widehat{\eta}_k(n))^2\exp\Big(\frac{|u|}{(k\log k)^{1/2}}\Big(\frac{1}{k^{\exp(-1/t_{j,\,m})}}-\frac{1}{k^{\exp(-1/t_{j,\,m-1})}}\Big)|\widehat{\eta}_k(n)|\Big)\Big].
\end{multline*}
Now we prove that, for large $n$, all $j\in\mn_0$ and integer $m\in [0,2^j]$,
\begin{equation*}
A(j,n):=\sum_{k\ge M(s_{n+1})+1}\frac{1}{k\log k}\Big(\frac{1}{k^{\exp(-1/t_{j,\,m})}}-\frac{1}{k^{\exp(-1/t_{j,\,m-1})}}\Big)^2 \leq \frac{2^{-j}(v_n-v_{n+1})}{v_{n+1}^2}.
\end{equation*}
For fixed $a,b>0$, the function $x\mapsto x^{-1}\eee^{-x}(\eee^{-ax}-\eee^{-bx})^2$ is decreasing on $(1,\infty)$. Indeed, $x\mapsto x\eee^{-x}$ is decreasing on $(1,\infty)$, and $$x\mapsto \eee^{-\min(a,b)x} x^{-2}(1-\eee^{-(\max(a,b)-\min(a,b))x})^2$$ is decreasing on $(0,\infty)$ as the product of two positive decreasing functions. As far as monotonicity of the second function is concerned, observe that, up to a multiplicative constant, it is the Laplace-Stieltjes transform of the Lebesgue-Stieltjes convolution of the uniform distribution on $[0,1]$ with itself. Using this argument with $x=\log k$, $a=\exp(-1/t_{j,\,m})$ and $b=\exp(-1/t_{j,\,m-1})$ we conclude that the function of argument $k$ under the sum defining $A(j,n)$ is decreasing, whence
\begin{multline*}
A(j,n)\leq \int_\eee^\infty\frac{1}{x\log x}\Big(\frac{1}{x^{2\exp(-1/t_{j,\,m-1})}}+\frac{1}{x^{2\exp(-1/t_{j,\,m})}}-\frac{2}{x^{\exp(-1/t_{j,\,m-1})+\exp(-1/t_{j,\,m})}}\Big){\rm d}x\\=\int_{2\exp(-1/t_{j,\,m-1})}^\infty \frac{\eee^{-x}}{x}{\rm d}x+\int_{2\exp(-1/t_{j,\,m})}^\infty \frac{\eee^{-x}}{x}{\rm d}x\\-2\int_{\exp(-1/t_{j,\,m-1})+\exp(-1/t_{j,\,m})}^\infty \frac{\eee^{-x}}{x}{\rm d}x=\int_{2\exp(-1/t_{j,\,m-1})}^1 \frac{\eee^{-x}}{x}{\rm d}x\\+\int_{2\exp(-1/t_{j,\,m})}^1 \frac{\eee^{-x}}{x}{\rm d}x-2\int_{\exp(-1/t_{j,\,m-1})+\exp(-1/t_{j,\,m})}^1 \frac{\eee^{-x}}{x}{\rm d}x.
\end{multline*}
Put $I(y):=\int_0^y x^{-1}(1-\eee^{-x}){\rm d}x$ for $y\in [0,1]$ and observe that $$\int_y^1 x^{-1}\eee^{-x}{\rm d}x=-\log y-I(1)+I(y),\quad y\in (0,1].$$ The function $x\mapsto x^{-1}(1-\eee^{-x})$ is decreasing on $(0,\infty)$ as the Laplace-Stieltjes transform of the uniform distribution on $[0,1]$. Hence, $I$ is concave on $[0,1]$ and thereupon, for $a,b\in [0,1]$, $I(2a)+I(2b)-2I(a+b)\leq 0$. As a consequence,
\begin{multline}
A(j,n)\leq -2\log 2+\frac{1}{t_{j,\,m-1}}+\frac{1}{t_{j,\,m}}+2\log\Big(\eee^{-1/t_{j,\,m-1}}+\eee^{-1/t_{j,\,m}}\Big)\\\leq \frac{t_{j,\,m}-t_{j,\,m-1}}{t_{j,\,m-1}t_{j,\,m}}\leq \frac{2^{-j}(v_n-v_{n+1})}{v_{n+1}^2},\label{eq:ajn}
\end{multline}
which proves the claim.

Next we work towards estimating $$B(u,k,n):=\exp\Big(\frac{|u|}{(k\log k)^{1/2}}\Big(\frac{1}{k^{\exp(-1/t_{j,\,m-1})}}-\frac{1}{k^{\exp(-1/t_{j,\,m})}}\Big)|\widehat{\eta}_k(n)|\Big)$$ for $k\geq M(s_{n+1})+1$. Assume first that $2^j\geq v_{n+1}^{-2}(v_n-v_{n+1})$.  Observe that, for $a>0$ and $0<s<t$, $$0\leq \exp(-a\eee^{-1/s})-\exp(-a\eee^{-1/t})\leq a\exp(-a\eee^{-1/s})\eee^{-1/t}(1/s-1/t).$$ Using this inequality with $a=\log k$, $t=t_{j,\,m-1}$ and $s=t_{j,\,m}$ we obtain $$0\leq \frac{1}{k^{\exp(-1/t_{j,\,m-1})}}-\frac{1}{k^{\exp(-1/t_{j,\,m})}}\leq \frac{\log k}{k^{\exp(-1/t_{j,\,m-1})}}\eee^{-1/t_{j,\,m}}\Big(\frac{1}{t_{j,\,m-1}}-\frac{1}{t_{j,\,m}}\Big).$$ In view of the inequality $x\eee^{-ax}\leq (\eee a)^{-1}$ for $x\geq 0$ we conclude that the right-hand side does not exceed
\begin{multline*}
\frac{1}{\eee}\eee^{1/t_{j,\,m-1}-1/t_{j,\,m}}\Big(\frac{1}{t_{j,\,m-1}}-\frac{1}{t_{j,\,m}}\Big)\leq \frac{1}{\eee} \exp\Big(\frac{2^{-j}(v_n-v_{n+1})}{v_{n+1}^2}\Big)\frac{2^{-j}(v_n-v_{n+1})}{v_{n+1}^2}\\\leq \frac{2^{-j}(v_n-v_{n+1})}{v_{n+1}^2},
\end{multline*}
where the last inequality is justified by our present choice of $j$. Thus, $$B(u,k,n)\leq \frac{|u|2^{-j}\log n}{(\log M(s_{n+1}))^{1/2}}\frac{v_n-v_{n+1}}{v_{n+1}^2}=:C_j(u,n)$$ whenever $k\geq M(s_{n+1})+1$.

Assume now that $2^j\leq v_{n+1}^{-2}(v_n-v_{n+1})$ for nonnegative integer $j$. Using a trivial estimate $$\frac{1}{k^{\exp(-1/t_{j,\,m-1})}}-\frac{1}{k^{\exp(-1/t_{j,\,m})}}\leq 1$$ we infer
$$B(u,k,n)\leq \frac{|u|\log n}{(\log M(s_{n+1}))^{1/2}}=:C_j(u,n)$$ whenever $k\geq M(s_{n+1})+1$.

Using now $1+x\le \eee^x$ for $x\in\mathbb{R}$ we arrive at
\begin{equation*}
\me\big[\exp\big(\pm u(Y_n(t_{j,\,m})-Y_n(t_{j,\,m-1}))\big)\big]\leq \exp\Big(\frac{u^2}{2}\frac{2^{-j}(v_n-v_{n+1})}{v_{n+1}^2}\eee^{C_j(u,n)}\Big),\quad u\in\mathbb{R}.
\end{equation*}
By Markov's inequality and the inequality $\eee^{u|x|}\le \eee^{ux}+\eee^{-ux}$ for $x\in\mathbb{R}$,
\begin{multline*}
\mathbb{P}\big\{f(s_n)|Y_n(t_{j,\,m})-Y_n(t_{j,\,m-1})|>\varepsilon a_j \big\}\leq \exp\Big(-u\frac{\varepsilon a_j}{f(s_n)}\Big)\mathbb{E}[\exp(u|Y_n(t_{j,\,m})-Y_n(t_{j,\,m-1})|)]\\\leq  2\exp\Big(-\frac{u\varepsilon a_j}{f(s_n)}+\frac{u^2}{2}\frac{2^{-j}(v_n-v_{n+1})}{v_{n+1}^2}\eee^{C_j(u,n)}\Big).
\end{multline*}
Put $$u=\frac{\varepsilon 2^{j/2}}{f(s_n)}\frac{v_{n+1}^2}{v_n-v_{n+1}}\quad\text{and}\quad k_n:=\frac{1}{(f(s_n))^2}\frac{v_{n+1}^2}{v_n-v_{n+1}}.$$ In view of $$(\log M(s_{n+1}))^{-1/2}~\sim~ n^{\gamma/2-1/2},~\Big(\frac{v_{n+1}}{v_n-v_{n+1}}\Big)^{1/2}~\sim~(1-\gamma)^{-1/2} n^{\gamma/2},\quad n\to\infty$$ and $$\lim_{n\to\infty} (\log 1/s_n)v_{n+1}=1$$ we conclude that, for $j$ satisfying $2^{j}\geq v_{n+1}^{-2}(v_n-v_{n+1})$,
\begin{multline*}
C_j(u,n)=\frac{|u|2^{-j}\log n}{(\log M(s_{n+1}))^{1/2}}\frac{v_n-v_{n+1}}{v_{n+1}^2}=\frac{\varepsilon 2^{-j/2}\log n}{f(s_n)(\log M(s_{n+1}))^{1/2}}\\\leq \frac{\varepsilon \log n}{f(s_n)(\log M(s_{n+1}))^{1/2}}\frac{v_{n+1}}{(v_n-v_{n+1})^{1/2}}\\=\frac{\varepsilon 2^{1/2}((\log 1/s_n) v_{n+1})^{1/2}(\log^{(3)}1/s_n)^{1/2}\log n}{(\log M(s_{n+1}))^{1/2}}\Big(\frac{v_{n+1}}{v_n-v_{n+1}}\Big)^{1/2}\\~\sim~ \varepsilon 2^{1/2}n^{\gamma-1/2}(\log n)^{3/2}~\to~0,\quad n\to\infty.
\end{multline*}
For nonnegative integer $j$ satisfying $2^{j}\leq v_{n+1}^{-2}(v_n-v_{n+1})$, $C_j(u,n)$ admits the same upper bound:
\begin{multline*}
C_j(u,n)=\frac{|u|\log n}{(\log M(s_{n+1}))^{1/2}}=\frac{\varepsilon 2^{j/2}\log n}{f(s_n)(\log M(s_{n+1}))^{1/2}}\frac{v_{n+1}^2}{v_n-v_{n+1}}\\\leq \frac{\varepsilon \log n}{f(s_n)(\log M(s_{n+1}))^{1/2}}\frac{v_{n+1}}{(v_n-v_{n+1})^{1/2}}~\to~0,\quad n\to\infty.
\end{multline*}
Therefore, for sufficiently large $n$ such that $k_n>\varepsilon^{-2} \log 2$ and $\eee^{C_j(u,n)}\leq 3/2$,
\begin{multline*}
\sum_{j\ge 0}\mathbb{P}\Big\{\max_{1\le m \le 2^j}f(s_n)|Y_n(t_{j,\,m})-Y_n(t_{j,\,m-1})|>\varepsilon a_j \Big\}\\ \le \sum_{j\ge 0}2^j 2\exp(-\varepsilon^2(j+1)k_n)\exp\big(3\varepsilon^2k_n/4\big)=\frac{2\exp(-\varepsilon^2 k_n/4)}{1-2\exp(-\varepsilon^2 k_n)}.
\end{multline*}
Since $k_n\sim 2 n^\gamma\log n$ as $n\to\infty$, \eqref{5.12} follows.

Now we comment on the reason behind the passage from $Y^\ast_n$ to $Y_n$ via a logarithmic transformation. In order to have in the present context a successful approximation with the help of a dyadic partition of an interval $[\lambda_{n+1},\lambda_n]$, say, the endpoints should satisfy $\lim_{n\to\infty}(\lambda_n/\lambda_{n+1})=1$. This is the case for $\lambda_n=v_n$ and not the case for $\lambda_n=s_n$.
\end{proof}

We are prepared to prove Proposition \ref{pr1}.
\begin{proof}[Proof of Proposition \ref{pr1}]
We only prove \eqref{5.1} because \eqref{5.2} follows from \eqref{5.1} by replacing $\eta_k$ with $-\eta_k$. Recall our convention that $\sigma^2=1$.

Choose any $\gamma>0$ sufficiently close to $0$, put $s_n=\exp(-\exp(n^{1-\gamma}))$ for $n\in\mn$ and select $\rho=\rho(\gamma)$ such that \eqref{5.7} holds true. Let $M(s)=\lfloor \log 1/s/\log\log 1/s\rfloor$ for $s\in (0,\eee^{-\eee})$. 
By Lemmas \ref{lemma_3} and \ref{lemma4} and the fact that $\mathbb{E}\big(\eta_k\1_{(A_{k,\rho}(s))^c}\big)=-\mathbb{E}\big(\eta_k\1_{(A_{k,\rho}(s))^c}\big)$,
\begin{equation}\label{5.16}
\limsup_{n\to\infty} f(s_n)\sum_{k\ge M(s_n)+1}\frac{(\log k)^{-1/2}}{k^{1/2+s_n}}\eta_k\le 1+\gamma\quad\text{\rm a.s.}
\end{equation}
Lemma \ref{lemma5} in combination with relation \eqref{5.16} entails
$$\limsup_{s\to 0+} f(s)\sum_{k\ge M(s)+1}\frac{(\log k)^{-1/2}}{k^{1/2+s}}\eta_k\le 1+\gamma\quad\text{\rm a.s.}$$ Finally, by Lemma \ref{lemma_2},
\begin{equation*}
\limsup_{s\to 0+}f(s)\sum_{k\ge 2}\frac{(\log k)^{-1/2}}{k^{1/2+s}}\eta_k\le 1+\gamma\quad\text{\rm a.s.,}
\end{equation*}
which entails \eqref{5.1} because the left-hand side does not depend on $\gamma$.
\end{proof}

\begin{proposition}\label{pr2}
Under the assumptions of Theorem \ref{main},
\begin{equation}\label{5.17}
\limsup_{s\to 0+}\Big(\frac{1}{\log 1/s\,\log^{(3)}1/s}\Big)^{1/2}\sum_{k\ge 2}\frac{(\log k)^{-1/2}}{k^{1/2+s}}\eta_k\geq(2\sigma^2)^{1/2}\quad\text{\rm a.s.}
\end{equation}
and
\begin{equation}\label{5.18}
\liminf_{s\to 0+}\Big(\frac{1}{\log 1/s \,\log^{(3)}1/s 
}\Big)^{1/2}\sum_{k\ge 2}\frac{(\log k)^{-1/2}}{k^{1/2+s}}\eta_k\leq-(2\sigma^2)^{1/2}\quad\text{\rm a.s.}
\end{equation}
\end{proposition}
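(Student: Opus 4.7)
The plan for proving Proposition \ref{pr2} is to establish \eqref{5.17} along a sparse subsequence $s_n\to 0+$ by a second-Borel--Cantelli argument on independent block sums; \eqref{5.18} will then follow by replacing $\eta_k$ with $-\eta_k$. As before, assume $\sigma^2=1$. First I would fix a large parameter $\lambda>1$, set $s_n:=\exp(-\eee^{\lambda n})$, and introduce the cut-offs $k_n:=\lfloor\exp(1/s_n)\rfloor$. At $s=s_n$ decompose the full series as $I_n+\Delta_n+J_n$, corresponding to the three index ranges $2\le k\le k_{n-1}$, $k_{n-1}<k\le k_n$, and $k>k_n$. The crucial feature is that the blocks $(\Delta_n)_{n\ge 1}$ use pairwise disjoint sets of the $\eta_k$ and hence are independent. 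A direct integral computation via $v=2s_n\log k$ yields
\[
\sigma_n^2:=\mathrm{Var}\,\Delta_n=(1-\eee^{-\lambda})\log(1/s_n)+O(1),\quad n\to\infty,
\]
while $\mathrm{Var}\,J_n=O(1)$ uniformly in $n$ (since $2s_n\log k_n=2$), and $\mathrm{Var}\,I_n=O(\log(1/s_{n-1}))=O(\eee^{\lambda(n-1)})$.

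The heart of the proof is to show $\limsup_{n\to\infty}f(s_n)\Delta_n\ge\sqrt{1-\eee^{-\lambda}}$ a.s. I would truncate $\eta_k$ at a level $\tau_n\to\infty$ chosen so that the truncation has negligible effect in the spirit of Lemma \ref{lemma_3}, yet allows uniform control of exponential moments of the summands. Then a Cram\'er-type moderate-deviation lower bound for the truncated sum $\Delta_n$ should yield, for any $\varepsilon\in(0,1)$,
\[
\mmp\Big\{\Delta_n>(1-\varepsilon)\sigma_n\sqrt{2\log^{(3)}1/s_n}\Big\}\ge n^{-(1-\varepsilon)^2+o(1)},\quad n\to\infty,
\]
using $\log^{(3)}1/s_n=\log(\lambda n)\sim\log n$. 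Since the $(\Delta_n)$ are independent, the converse Borel--Cantelli lemma gives $\Delta_n>(1-\varepsilon)\sqrt{1-\eee^{-\lambda}}\,f(s_n)^{-1}$ infinitely often, a.s.

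Next I would show that $f(s_n)|I_n|$ and $f(s_n)|J_n|$ are asymptotically negligible (up to error vanishing as $\lambda\to\infty$). Applying Bernstein's inequality to the same truncated pieces yields bounds of the form $\mmp\{|J_n|>\log n\}\le 2\exp(-c(\log n)^2)$ and $\mmp\{|I_n|>A\sqrt{\log(1/s_{n-1})\log n}\}\le 2n^{-A^2}$, so the direct Borel--Cantelli lemma gives, a.s.\ for large $n$, $f(s_n)|J_n|\to 0$ and $f(s_n)|I_n|=O(\eee^{-\lambda/2})$. Combining the three contributions yields
\[
\limsup_{n\to\infty}f(s_n)\sum_{k\ge 2}\frac{(\log k)^{-1/2}}{k^{1/2+s_n}}\eta_k\ge(1-\varepsilon)\sqrt{1-\eee^{-\lambda}}-O(\eee^{-\lambda/2})\quad\text{a.s.}
\]
Letting $\varepsilon\to 0+$ and $\lambda\to\infty$, and noting that the $\limsup$ as $s\to 0+$ dominates the $\limsup$ along $(s_n)$, proves \eqref{5.17}; no Lemma \ref{lemma5}-style interpolation is needed here because we are passing to a $\limsup$ from below.

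The main obstacle is the Cram\'er-type lower-tail estimate for $\Delta_n$: ordinary Berry--Esseen only controls the distribution function at fixed levels, whereas the relevant threshold $t_n=\sqrt{2\log^{(3)}1/s_n}\asymp\sqrt{\log n}$ grows with $n$. Choosing the truncation rate for $\eta_k$ (plausibly of order $\sqrt{k}/(\log n)^\beta$) to balance the moderate-deviation error against the bias of truncation is the delicate step. This is the classical Kolmogorov-style core of every LIL lower bound and the unique place where $\mathbb{E}\eta^2<\infty$ is fully exploited, beyond the $L^2$-orthogonality used in the variance calculations above.
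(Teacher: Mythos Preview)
Your plan is correct and follows the same skeleton as the paper's proof: pass to a sparse subsequence, split the series at deterministic cut-offs so that the middle blocks are independent, obtain a moderate-deviation lower bound for the middle block, apply the converse Borel--Cantelli lemma, and control the boundary pieces separately. The one tactical difference is the spacing of the subsequence. You take $\log\log(1/s_n)=\lambda n$, so the initial block $I_n$ keeps a fixed fraction $\eee^{-\lambda}$ of the total variance; this forces you to bound $f(s_n)|I_n|$ by $O(\eee^{-\lambda/2})$ via a Bernstein bound and then send $\lambda\to\infty$ at the very end. The paper instead chooses $\log\log(1/\mathfrak{s}_n)=n^{1+\gamma}$ and abstract cut-offs $N_1,N_2$ with $\log\log N_1(s)=o(\log 1/s)$ and $\log\log N_2(s)\sim\log 1/s$; with this super-geometric spacing the boundary pieces have variance $o(\log 1/\mathfrak{s}_n)$ and therefore vanish outright (Lemma~\ref{lemma5.7}), so only the single limit $\delta\to 0$ remains. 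Your extra limit is harmless but slightly less clean. The Cram\'er-type lower bound you flag as ``the main obstacle'' is exactly Lemma~\ref{lemma5.8} in the paper, proved there by an explicit exponential change of measure $\mathbb{Q}_{s,u}$ and a CLT under the tilted law---precisely the mechanism you anticipate. A minor slip: your Bernstein bound for $I_n$ should read $2n^{-A^2/2}$ rather than $2n^{-A^2}$, so you need $A>\sqrt{2}$; this does not affect the conclusion.
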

Proposition \ref{pr2} will be proved by using Lemmas \ref{lemma_2} and \ref{lemma_3}, together with two additional lemmas. In the first of these, we show that an initial and a final fragments of the series in question do not contribute to the LIL.

As before, we assume without further notice that the assumptions of Theorem \ref{main} hold true and that $\sigma^2=1$. When proving Proposition \ref{pr2} we use the sets $A_{k,\rho}(s)$ and the corresponding variables $\widetilde{\eta}_{k,\rho}(s)$ with $\rho=1$.

\begin{lemma}\label{lemma5.7}
Fix any $\gamma>0$ and put $\mathfrak{s}_n:=\exp(-\exp(n^{1+\gamma}))$ for $n\ge 1 
$. Let $N_1$ and $N_2$ be functions which take positive integer values, may depend on $\gamma$ and satisfy
$N_1(s)\geq 2$ for small positive $s$, $\lim_{s\to 0+}(\log\log N_1(s)/\log 1/s)=0$, $\lim_{s\to 0+}(\log\log N_2(s)/\log 1/s)=1$ and $\lim_{s\to 0+}s\log N_2(s)=0$. Then
\begin{equation}\label{5.19}
\lim_{n\to\infty}f(\mathfrak{s}_n) \sum_{k=2}^{N_1(\mathfrak{s}_n)}\frac{(\log k)^{-1/2}}{k^{1/2+\mathfrak{s}_n}}\widetilde{\eta}_{k,1}(\mathfrak{s}_n)=0\quad\text{\rm a.s.}
\end{equation}
and
\begin{equation}\label{5.20}
\lim_{n\to\infty}f(\mathfrak{s}_n) \sum_{k\ge N_2(\mathfrak{s}_n)+1}\frac{(\log k)^{-1/2}}{k^{1/2+\mathfrak{s}_n}}\widetilde{\eta}_{k,1}(\mathfrak{s}_n)=0\quad\text{\rm a.s.}
\end{equation}
\end{lemma}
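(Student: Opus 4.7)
I would adapt the exponential-moment machinery from the proof of Lemma \ref{lemma4} (applied with $\rho=1$) to the two truncated sums in \eqref{5.19} and \eqref{5.20}. The only place where the summation range entered the derivation of \eqref{5.9} was the majorization $\sum(\log k)^{-1}k^{-1-2s}\le g(s)$, so the new ingredient is a sharper bound on the partial/tail variances of $X_{-1/2}$. Put
\begin{equation*}
S_n^{(i)}:=f(\mathfrak{s}_n)\sum_{k\in R_i(\mathfrak{s}_n)}\frac{(\log k)^{-1/2}\widetilde{\eta}_{k,1}(\mathfrak{s}_n)}{k^{1/2+\mathfrak{s}_n}},\qquad R_1(s):=\{2,\ldots,N_1(s)\},\ R_2(s):=\{N_2(s)+1,N_2(s)+2,\ldots\}.
\end{equation*}
Repeating verbatim the derivation between \eqref{5.8} and \eqref{5.9}, which used only the a.s.\ bound $|\widetilde{\eta}_{k,1}(s)|\le 2k^{1/2+s}(\log k\, g(s)/\log^{(3)}1/s)^{1/2}$ together with $\me[(\widetilde{\eta}_{k,1}(s))^2]\le 1$, yields, for every $u\in\mr$,
\begin{equation*}
\me\bigl[\eee^{uS_n^{(i)}}\bigr]\le\exp\Big(\tfrac{u^2}{2}(f(\mathfrak{s}_n))^2\,\Sigma_i(\mathfrak{s}_n)\exp\bigl(\sqrt{2}|u|/\log^{(3)}1/\mathfrak{s}_n\bigr)\Big),
\end{equation*}
where $\Sigma_i(s):=\sum_{k\in R_i(s)}(\log k)^{-1}k^{-1-2s}$.

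The decisive new input is the estimate $\Sigma_i(\mathfrak{s}_n)=o(\log 1/\mathfrak{s}_n)$. A change of variable $x=2s\log k$ turns the partial sum into an exponential integral; using $\int_a^b x^{-1}\eee^{-x}\,{\rm d}x=\log(b/a)+O(1)$ for $0<a<b$ bounded above, one obtains, whenever $s\log N\to 0$,
\begin{equation*}
\sum_{k=2}^{N}\frac{(\log k)^{-1}}{k^{1+2s}}\sim\log\log N\quad\text{as }s\to 0+.
\end{equation*}
Hence $\Sigma_1(s)\sim\log\log N_1(s)$, while $\Sigma_2(s)=g(s)-\sum_{k=2}^{N_2(s)}(\log k)^{-1}k^{-1-2s}\sim\log 1/s-\log\log N_2(s)$, and the assumptions on $N_1$ and $N_2$ force $\Sigma_i(\mathfrak{s}_n)/\log 1/\mathfrak{s}_n\to 0$.

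Given this, take $u=\beta\log^{(3)}1/\mathfrak{s}_n$ so that $\eee^{\sqrt{2}\beta}$ stays bounded. Markov's inequality together with $\eee^{u|x|}\le\eee^{ux}+\eee^{-ux}$ gives
\begin{equation*}
\mmp\{|S_n^{(i)}|>\varepsilon\}\le 2\exp\Big(-\varepsilon\beta\log^{(3)}1/\mathfrak{s}_n+\tfrac{\beta^2\eee^{\sqrt{2}\beta}}{4}\,\frac{\Sigma_i(\mathfrak{s}_n)}{\log 1/\mathfrak{s}_n}\log^{(3)}1/\mathfrak{s}_n\Big).
\end{equation*}
Since $\Sigma_i(\mathfrak{s}_n)/\log 1/\mathfrak{s}_n=o(1)$ and $\log^{(3)}1/\mathfrak{s}_n=(1+\gamma)\log n$, choosing $\beta$ sufficiently large (depending on $\varepsilon$ and $\gamma$) makes the right-hand side at most $n^{-2}$ for large $n$. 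The Borel-Cantelli lemma then yields $S_n^{(i)}\to 0$ a.s., which is exactly \eqref{5.19} and \eqref{5.20}.

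\textbf{Main obstacle.} The crux is the variance estimate $\Sigma_i(s)=o(\log 1/s)$. For \eqref{5.19} it is immediate because $N_1(s)$ is too small to accumulate variance of order $\log 1/s$. For \eqref{5.20} both hypotheses on $N_2$ are genuinely needed: $\log\log N_2(s)\sim\log 1/s$ supplies the near-complete coverage by the partial sum, while $s\log N_2(s)\to 0$ keeps the upper endpoint of the associated exponential integral in the regime where the $\log\log N$ asymptotic is valid. Everything else is a bookkeeping copy of the proof of Lemma \ref{lemma4}.
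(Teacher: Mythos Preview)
Your proposal is correct and follows essentially the same route as the paper: bound the exponential moment exactly as in Lemma~\ref{lemma4} but with the partial/tail variance $\Sigma_i(s)$ in place of $g(s)$, show $\Sigma_i(s)=o(\log 1/s)$ via the exponential-integral substitution, and conclude by Markov plus Borel--Cantelli along $(\mathfrak{s}_n)$. The only cosmetic difference is that the paper fixes $u=(1/\varepsilon)\log^{(3)}1/\mathfrak{s}_n$ and absorbs the smallness of $\Sigma_i/g$ into a parameter $r$, whereas you take $u=\beta\log^{(3)}1/\mathfrak{s}_n$ with $\beta$ large; both choices yield a summable bound $n^{-c}$ with $c>1$.
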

\begin{proof}
As in the proof of Lemma \ref{lemma4}, we obtain the result, with $f^\ast$ replacing $f$. For $s>0$ close to $0$, put
\begin{equation*}
Z_1(s):=f^\ast(s)
\sum_{k=2}^{N_1(s)}\frac{(\log k)^{-1/2}}{k^{1/2+s}}\widetilde{\eta}_{k,1}(s).
\end{equation*}
The reasoning used to derive both \eqref{5.19} and \eqref{5.20} is similar to the one applied in the proof of Lemma \ref{lemma4}. Therefore, we give a proof of \eqref{5.19} and indicate the only minor change needed for a proof of
\eqref{5.20}.

Regarding \eqref{5.19}, in view of the direct part of the Borel–Cantelli lemma, it is sufficient to demonstrate that, for all $\varepsilon>0$,
\begin{equation}\label{5.21}
\sum_{n\ge 1}\mathbb{P}\{Z_1(\mathfrak{s}_n)>\varepsilon\}<\infty.
\end{equation}
To this end, we obtain a counterpart of \eqref{5.9}, for $u\in\mathbb{R}$,
\begin{equation*}
\mathbb{E}[\eee^{uZ_1(s)}]\le\exp\Big(\frac{u^2(f^\ast(s))^2}{2}\sum_{k=2}^{N_1(s)}\frac{(\log k)^{-1}}{k^{1+2s}}\exp\Big(\frac{\sqrt{2}|u|}{ \log^{(3)}1/s}\Big)\Big).
\end{equation*}
For each fixed $s>0$, the function $x\to(\log x)^{-1}x^{-1-2s}$ is decreasing on $(1, \infty)$. The assumption $\lim_{s\to 0+}(\log\log N_1(s)/\log 1/s)=0$ entails $\lim_{s\to 0+}s\log N_1(s)=0$. Hence,
\begin{multline}\label{5.22}
\sum_{k=2}^{N_1(s)}\frac{(\log k)^{-1}}{k^{1+2s}}\le \frac{(\log 2)^{-1}}{2^{1+2s}}+\int_2^{N_1(s)}\frac{(\log x)^{-1}}{x^{1+2s}}{\rm d}x\\=O(1)+\int_{(2\log 2) s}^{2s\log N_1(s)}
y^{-1}\eee^{-y}{\rm d}y=O(\log\log N_1(s)),\quad s\to 0+.
\end{multline}
Our choice of $N_1$ guarantees that there exists an $r>0$ close to $0$ and such that
\begin{equation*}
\sum_{k=2}^{N_1(s)}\frac{(\log k)^{-1}}{k^{1+2s}}\le r g(s)
\end{equation*}
for small positive $s$, and also $(1-\delta)(1+\gamma)>1$, where $\delta:=r(4\varepsilon^2)^{-1}\exp(\sqrt{2}\varepsilon^{-1})$ with $\varepsilon$ appearing in \eqref{5.21}. Then,
for $u\in\mathbb{R}$ and small $s>0$,
\begin{multline*}
\mathbb{E}[\eee^{uZ_1(s)}]\le\exp\Big(\frac{ru^2 g(s)(f^\ast(s))^2}{2}\exp\Big(\frac{\sqrt{2}|u|}{ \log^{(3)}1/s}\Big)\Big)=\exp\Big(\frac{ru^2}{4 \log^{(3)}1/s}\exp\Big(\frac{\sqrt{2}|u|}{ \log^{(3)}1/s}\Big)\Big).
\end{multline*}
Applying Markov's inequality with $u=(1/\varepsilon)\log^{(3)} 1/\mathfrak{s}_n$ yields, for large $n$,
\begin{multline*}
\mathbb{P}\{Z_1(\mathfrak{s}_n)>\varepsilon\}\le \eee^{-u\varepsilon}\mathbb{E}[\eee^{uZ_1(\mathfrak{s}_n)}]\le\exp\big(-\big(1-r(4\varepsilon^2)^{-1}\eee^{\sqrt{2}\varepsilon^{-1}}\big)\log^{(3)} 1/\mathfrak{s}_n\big)=\frac{1}{n^{(1-\delta)(1+\gamma)}},
\end{multline*}
thereby proving \eqref{5.21} and hence \eqref{5.19}.

The proof of \eqref{5.20} is analogous to that of \eqref{5.19}. The corresponding version of \eqref{5.22} reads
\begin{equation}\label{5.23}
\sum_{k\ge N_2(s)+1}\frac{(\log k)^{-1}}{k^{1+2s}}\le \int_{N_2(s)}^{\infty} \frac{(\log x)^{-1}}{x^{1+2s}}{\rm d}x=\int_{2s\log N_2(s)}^\infty y^{-1}\eee^{-y}{\rm d}y=o(1)
\end{equation}
as $s\to 0+$.
\end{proof}

Lemma \ref{lemma5.8} treats the component of the series in question which gives a principal contribution to the LIL. Our proof is based on the converse part of the Borel–Cantelli lemma, which requires independence. The independence requirement complicates to some extent a selection of the essential component.
\begin{lemma}\label{lemma5.8}
Fix sufficiently small $\delta>0$, pick $\gamma>0$ satisfying $(1+\gamma)(1-\delta^2/8)<1$ and let, as before, $\mathfrak{s}_n=\exp(-\exp(n^{1+\gamma}))$ for $n\in\mn$. Then
\begin{equation*}
\limsup_{n\to\infty} f(\mathfrak{s}_n) \sum_{k\ge 2}\frac{(\log k)^{-1/2}}{k^{1/2+\mathfrak{s}_n}}\widetilde{\eta}_{k,1}(\mathfrak{s}_n)\ge 1-\delta\quad\text{\rm a.s.}
\end{equation*}
\end{lemma}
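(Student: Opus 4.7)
The plan is to apply the converse Borel-Cantelli lemma to a sequence of independent random variables extracted from the sums at $\mathfrak{s}_n$. I would choose functions $N_1,N_2:(0,\eee^{-\eee})\to\mn$ satisfying the hypotheses of Lemma~\ref{lemma5.7} together with the disjointness condition $N_2(\mathfrak{s}_n)\le N_1(\mathfrak{s}_{n+1})$ for all large $n$. Such a choice is available thanks to the super-exponential growth $\log(1/\mathfrak{s}_n)=\eee^{n^{1+\gamma}}$: one may take $N_2(s):=\lfloor\exp((s\log(1/s))^{-1})\rfloor$ for all $s$ and set $N_1(\mathfrak{s}_{n+1}):=N_2(\mathfrak{s}_n)$ at the sequence points (extending $N_1$ monotonically in between), so that $\log\log N_1(\mathfrak{s}_{n+1})\sim\log(1/\mathfrak{s}_n)=\eee^{n^{1+\gamma}}$ and the hypothesis $\log\log N_1(s)/\log(1/s)\to 0$ holds because $\eee^{n^{1+\gamma}-(n+1)^{1+\gamma}}\to 0$. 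Setting
\[
W_n:=\sum_{k=N_1(\mathfrak{s}_n)+1}^{N_2(\mathfrak{s}_n)}\frac{(\log k)^{-1/2}}{k^{1/2+\mathfrak{s}_n}}\widetilde{\eta}_{k,1}(\mathfrak{s}_n),
\]
the variables $(W_n)$ are independent because each $\widetilde{\eta}_{k,1}(\mathfrak{s}_n)$ is a Borel function of $\eta_k$ alone and the index blocks $I_n:=(N_1(\mathfrak{s}_n),N_2(\mathfrak{s}_n)]$ are pairwise disjoint. By Lemma~\ref{lemma5.7}, $f(\mathfrak{s}_n)$ times the difference between the full truncated sum and $W_n$ vanishes a.s., so it suffices to show $\limsup_{n\to\infty}f(\mathfrak{s}_n)W_n\ge 1-\delta$ a.s.

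By the converse Borel-Cantelli lemma, this reduces to proving $\sum_{n\ge 1}\mmp\{f(\mathfrak{s}_n)W_n>1-\delta\}=\infty$. To lower bound each probability I would first compute $\mathrm{Var}(f(\mathfrak{s}_n)W_n)=(1+o(1))/(2\log^{(3)}(1/\mathfrak{s}_n))$, using $\me[(\widetilde{\eta}_{k,1}(\mathfrak{s}_n))^2]\to 1$ and the integral estimates behind \eqref{5.22}--\eqref{5.23} applied to $I_n$. The event $\{f(\mathfrak{s}_n)W_n>1-\delta\}$ then represents a mild moderate deviation at roughly $(1-\delta)\sqrt{2\log^{(3)}(1/\mathfrak{s}_n)}$ standard deviations. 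Mirroring the exponential-moment calculation of Lemma~\ref{lemma4}, but now using a lower bound $\eee^x\ge 1+x+(x^2/2)(1-C|x|\eee^{|x|})$ in place of the upper bound used there, together with the classical Cram\'er exponential-tilt argument (tilt parameter $u\sim 2(1-\delta)\log^{(3)}(1/\mathfrak{s}_n)$), should yield
\[
\mmp\{f(\mathfrak{s}_n)W_n>1-\delta\}\ge\exp\bigl(-(1-\delta^2/8)\log^{(3)}(1/\mathfrak{s}_n)\bigr)=n^{-(1+\gamma)(1-\delta^2/8)},
\]
which sums to infinity by our hypothesis on $\gamma$.

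The main technical obstacle is this lower MGF estimate. The summands of $W_n$ are independent but not identically distributed, with sup-norms controlled by \eqref{5.8} that depend on $k$ and on $\mathfrak{s}_n$. One must verify that $u\cdot\max_{k\in I_n}|(\log k)^{-1/2}k^{-1/2-\mathfrak{s}_n}\widetilde{\eta}_{k,1}(\mathfrak{s}_n)|$ is $o(1)$, which follows from $\mathfrak{s}_n\log N_2(\mathfrak{s}_n)\to 0$, and then track and absorb the Taylor-remainder corrections together with the defect $1-\me[(\widetilde{\eta}_{k,1}(\mathfrak{s}_n))^2]$ into the slack $\delta^2/8$. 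This bookkeeping is what dictates the precise form of the condition $(1+\gamma)(1-\delta^2/8)<1$ in the hypothesis of the lemma.
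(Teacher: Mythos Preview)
Your plan coincides with the paper's: reduce via Lemma~\ref{lemma5.7} to independent block sums $W_n$ over disjoint index ranges and apply the converse Borel--Cantelli lemma after a Cram\'er-type lower bound on $\mmp\{f(\mathfrak{s}_n)W_n>1-\delta\}$. The paper executes the tilting step by passing to the two-sided event $\{1-\delta<f(\mathfrak{s}_n)W_n\le 1\}$ and tilting to its midpoint (in your normalization, $u\sim 2(1-\delta/2)\log^{(3)}(1/\mathfrak{s}_n)$), which is what produces the exponent $1-\delta^2/8$; your proposed tilt to level $1-\delta$ would naturally give exponent $(1-\delta)^2$, which is also sufficient under the hypothesis but does not match your stated bound, and the $o(1)$ bound on $u$ times the maximal summand comes from the $1/\log\log(1/s)$ factor in \eqref{5.8} rather than from $\mathfrak{s}_n\log N_2(\mathfrak{s}_n)\to 0$.
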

\begin{proof}
Observe that $\lim_{n\to\infty}(\mathfrak{s}_{n+1}/\mathfrak{s}_n)=\infty$. Functions $s\mapsto \log\log N_1(s)/\log 1/s$ in Lemma \ref{lemma5.7} can tend to $0$ as fast as we please. Thus, we can choose $N_1$ satisfying
$$\frac{\log\log N_1(\mathfrak{s}_{n+1})}{\log 1/\mathfrak{s}_n}=\frac{\log\log N_1(\mathfrak{s}_{n+1})}{\log 1/\mathfrak{s}_{n+1}}\frac{\log 1/\mathfrak{s}_{n+1}}{\log 1/\mathfrak{s}_n}~\to~+\infty,\quad n\to\infty.$$ Let $N_2$ be any function satisfying the assumptions of Lemma \ref{lemma5.7}. Since $$\lim_{n\to\infty}\frac{\log\log N_2(\mathfrak{s}_n)}{\log 1/\mathfrak{s}_n}=1,$$ we conclude that there exists $n_0\in\mn$ such that $$\frac{\log\log N_1(\mathfrak{s}_{n+1})}{\log 1/\mathfrak{s}_n}\geq \frac{\log\log N_2(\mathfrak{s}_n)}{\log 1/\mathfrak{s}_n}\quad\text{for all}~n\geq n_0,$$ whence
\begin{equation}\label{5.24}
N_1(\mathfrak{s}_{n+1})\ge N_2(\mathfrak{s}_n),\quad n\ge n_0.
\end{equation}

In view of Lemma \ref{lemma5.7}, it is sufficient to check that
        \begin{equation}\label{5.25}
            \limsup_{n\to\infty}Z_2(\mathfrak{s}_n)\ge 1-\delta\quad\text{a.s.},
        \end{equation}
        where, for small $s>0$,
        \begin{equation*}
            Z_2(s):=f(s)
\sum_{k=N_1(s)+1}^{N_2(s)}\frac{(\log k)^{-1/2}}{k^{1/2+s}}\widetilde{\eta}_{k,1}(s).
        \end{equation*}
We shall prove that there exists $\overline{s}>0$ such that for all $s\in(0,\overline{s})$,
\begin{equation}\label{5.26}
\mathbb{P}\{Z_2(s)>1-\delta\}\ge 3^{-1}\eee^{-(1-\delta^2/8)\log^{(3)}1/s}.
\end{equation}
As a consequence,
\begin{equation*}
\sum_{n\ge n_1}\mathbb{P}\{Z_2(\mathfrak{s}_n)>1-\delta\}\ge 3^{-1}\sum_{n\ge n_1}\frac{1}{n^{(1+\gamma)(1-\delta^2/8)}}=\infty,
\end{equation*}
where $n_1\ge n_0$ is chosen to ensure that $\mathfrak{s}_n<\overline{s}$ for $n\ge n_1$. In view of \eqref{5.24} the random variables
$Z_2(\mathfrak{s}_{n_1})$, $Z_2(\mathfrak{s}_{n_1+1}),\ldots$ are independent. Hence, divergence of the series entails \eqref{5.25} by the converse part of the Borel-Cantelli lemma.

When proving \eqref{5.26} we shall use the event
\begin{equation*}
U_s:=\big\{1-\delta<Z_2(s)\le 1\big\}=\big\{(1-\delta)V(s)< W(s)/(g(s))^{1/2}\le V(s)\big\},
\end{equation*}
where $V(s)=(2\,\log^{(3)}1/s)^{1/2}$ and
\begin{equation*}
W(s):=\frac{Z_2(s)}{f(s)}=\sum_{k=N_1(s)+1}^{N_2(s)}\frac{(\log k)^{-1/2}}{k^{1/2+s}}\widetilde{\eta}_{k,1}(s).
\end{equation*}
For $u\in\mathbb{R}$ and small $s>0$, let $\mathbb{Q}_{s,u}$ be a probability measure on $(\Omega, \mathfrak{F})$ defined by
\begin{equation}
\mathbb{Q}_{s,u}(A)=\frac{\mathbb{E}\big(\eee^{uW(s)/(g(s))^{1/2}}\1_A\big)}{\mathbb{E}\eee^{uW(s)/(g(s))^{1/2}}}, \quad A\in\mathfrak{F}.
\end{equation}
Then
\begin{multline}\label{5.27}
\Big(\mathbb{E}\eee^{u(W(s)/(g(s))^{1/2}-V(s))}\Big)\mathbb{Q}_{s,u}(U_s)=\eee^{-uV(s)}\me\eee^{uW(s)/(g(s))^{1/2}}\1_{U_s}\\\leq \mathbb{P}(U_s)\le\mathbb{P}\{Z_2(s)>1-\delta\}.
\end{multline}
We shall demonstrate that by selecting an appropriate $u=u(s)=O((\log^{(3)}1/s)^{1/2})$, the expectation on the left-hand side is bounded from below by $\eee^{-(1-\delta^2/8)\log^{(3)}1/s}$ for small positive $s$. Also, we shall prove that $\mathbb{Q}_{s,u}(U_s)\ge 1/3$, thereby deriving \eqref{5.26}.

We start by showing that, with $u=O((\log^{(3)}1/s)^{1/2})$,
\begin{equation}\label{5.28}
\mathbb{E}[\eee^{uW(s)/(g(s))^{1/2}}]~\sim~ \eee^{u^2/2+u^2 h(s)},\quad s\to 0+
\end{equation}
for some function $h$ satisfying $\lim_{s\to 0+}h(s)=0$. Put
\begin{equation*}
\xi_k(s)=\frac{(\log k)^{-1/2}}{(g(s))^{1/2}k^{1/2+s}}\widetilde{\eta}_{k,1}(s),\quad k\ge 2, \quad s\in(0, \eee^{-\eee}).
\end{equation*}
As a consequence, for $u\in\mathbb{R}$,
\begin{equation}\label{5.29}
uW(s)/(g(s))^{1/2}=\sum_{k=N_1(s)+1}^{N_2(s)}u\xi_k(s).
\end{equation}
According to the second inequality in \eqref{5.8},
\begin{equation*}
|u\xi_k(s)|=O(1/\log\log 1/s)=o(1),\quad s\to 0+\quad\text{\rm a.s.}
\end{equation*}
for each $k\geq 2$. Using
\begin{equation*}
\eee^x=1+x+x^2/2+o(x^2)\quad\text{and}\quad\log(1+x)=x+O(x^2),\quad x\to 0,
\end{equation*}
we obtain
\begin{multline}\label{5.30}
\mathbb{E}[\eee^{uW(s)/(g(s))^{1/2}}]=\prod_{k=N_1(s)+1}^{N_2(s)}\mathbb{E}[\exp(u\xi_k(s))]=\prod_{k=N_1(s)+1}^{N_2(s)}\mathbb{E}\big[1+u\xi_k(s)+u^2\xi_k^2(s)(1/2+o(1))\big]\\=
\exp\sum_{k=N_1(s)+1}^{N_2(s)}\log\big(1+u^2\mathbb{E}\big[\xi_k^2(s)(1/2+o(1))\big]\big)\\=\exp\Big(u^2\big(1/2+o(1)\big)\sum_{k=N_1(s)+1}^{N_2(s)}\mathbb{E}\xi_k^2(s)+u^4O\Big(\sum_{k=N_1(s)+1}^{N_2(s)}(\mathbb{E}\xi_k^2(s))^2\Big)\Big).
\end{multline}
Here, we have used the fact that the $o(1)$ term can be chosen nonrandom. In view of \eqref{5.22} and \eqref{5.23}
\begin{equation*}
\sum_{k=N_1(s)+1}^{N_2(s)}\frac{(\log k)^{-1}}{k^{1+2s}}~\sim~ g(s),\quad s\to 0+.
\end{equation*}
The latter, combined with uniformity in the integer $k\in[N_1(s)+1,N_2(s)]$ of the limit relation
\begin{equation*}
\mathbb{E}[\widetilde{\eta}_{k,1}^2(s)]=\mathbb{E}[\eta_k^2\1_{(A_{k,1}(s))^c}]-\big(\mathbb{E}[\eta_k\1_{(A_{k,1}(s))^c}]\big)^2~\to~ 1,\quad s\to 0+,
\end{equation*}
results in
\begin{equation}\label{5.31}
\sum_{k=N_1(s)+1}^{N_2(s)}\mathbb{E}[\xi_k^2(s)]=\frac{1}{g(s)}\sum_{k=N_1(s)+1}^{N_2(s)}\frac{(\log k)^{-1}}{k^{1+2s}}\mathbb{E}\widetilde{\eta}_{k,1}^2(s)\to 1,\quad s\to 0+.
\end{equation}
Finally,
\begin{multline}\label{5.32}
u^2\sum_{k=N_1(s)+1}^{N_2(s)}(\mathbb{E}[\xi_k^2(s)])^2=\frac{u^2}{(g(s))^2}\sum_{k=N_1(s)+1}^{N_2(s)}\frac{(\log k)^{-2}}{k^{2+4s}}(\mathbb{E}[\widetilde{\eta}_{k,1}^2(s)])^2\\
\le \frac{u^2}{(g(s))^2}\sum_{k\ge N_1(s)+1}\frac{(\log k)^{-2}}{k^2}=o(1),\quad s\to 0+
\end{multline}
because both factors converge to $0$ as $s\to 0+$. Now relations \eqref{5.30}, \eqref{5.31} and \eqref{5.32} entail \eqref{5.28}.

Observe that formula \eqref{5.28}, with $u\in\mathbb{R}$ fixed, reads $\lim_{s\to 0+} \mathbb{E}[\eee^{uW(s)/(g(s))^{1/2}}]=\eee^{u^2/2}$ as $s\to0+$, which implies a central limit theorem $W(s)/(g(s))^{1/2}\dod \mathcal{N}(0,1)$ as $s\to0+$. Here, $\dod$ denotes convergence in distribution and $\mathcal{N}(0,1)$ denotes a random variable with the normal distribution with mean $0$ and variance $1$.

Passing to the proof of \eqref{5.26}, put
\begin{equation*}
u=u(s)=\sqrt{2}(1-\delta/2)(\log^{(3)}1/s)^{1/2}.
\end{equation*}
Formula \eqref{5.28} entails
\begin{equation}\label{5.33}
\mathbb{E}[\eee^{u(W(s)/(g(s))^{1/2}-V(s))}]=\eee^{-(1-\delta^2/4)\log^{(3)}1/s+o(\log^{(3)}1/s)}\ge\eee^{-(1-\delta^2/8)\log^{(3)}1/s}
\end{equation}
for small $s>0$. Now we show that the $\mathbb{Q}_{s,u}$-distribution of $W(s)/(g(s))^{1/2}-(u+2uh(s))$ converges weakly as $s\to0+$ to the $\mathbb{P}$-distribution of $\mathcal{N}(0,1)$.
To this end, we prove convergence of the moment generating functions. Let $\mathbb{E}_{\mathbb{Q}_{s,u}}$ denote the expectation with respect to the probability measure $\mathbb{Q}_{s,u}$. Invoking \eqref{5.28} we conclude that, for $t\in\mathbb{R}$,
\begin{multline*}
\mathbb{E}_{\mathbb{Q}_{s,u}}[\eee^{t(W(s)/(g(s))^{1/2}-(u+2uh(s)))}]~\sim~\frac{\mathbb{E}[\eee^{(t+u)W(s)/(g(s))^{1/2}}]}{\mathbb{E}[\eee^{uW(s)/(g(s))^{1/2}}]}\eee^{-t(u+2uh(s))}\\
=\exp\big((t+u)^2/2+(t+u)^2h(s)-u^2/2-u^2h(s)-t(u+2uh(s))\big)\\=\exp\big((1/2+h(s))t^2\big)\to\exp(t^2/2)=\mathbb{E}[\exp(t\,\mathcal{N}(0,1))],\quad s\to 0+.
\end{multline*}
As a consequence of the weak convergence, we infer
\begin{multline*}
\limsup_{s\to0+}\mathbb{Q}_{s,u}\big\{W(s)/(g(s))^{1/2}\le(1-\delta)V(s)\big\}\\ \le \lim_{s\to0+}\mathbb{Q}_{s,u}\big\{W(s)/(g(s))^{1/2}\le u+2uh(s)\big\}=\mathbb{P}\{\mathcal{N}(0,1)
\le 0\}=1/2.
\end{multline*}
Further, the relation $\lim_{s\to0+}(V(s)-(u+2uh(s)))=+\infty$ entails
\begin{equation*}
\lim_{s\to0+}\mathbb{Q}_{s,u}\big\{W(s)/(g(s))^{1/2}\le V(s)\big\}=1
\end{equation*}
and thereupon
\begin{multline}\label{5.34}
\mathbb{Q}_{s,u}(U_s)=\mathbb{Q}_{s,u}\big\{W(s)/(g(s))^{1/2}\le V(s)\big\}-\mathbb{Q}_{s,u}\big\{W(s)/(g(s))^{1/2}\le (1-\delta)V(s)\big\}\ge 1/3
\end{multline}
for small $s>0$. Now \eqref{5.26} follows from \eqref{5.27}, \eqref{5.33} and \eqref{5.34}. The proof of Lemma \ref{lemma5.8} is complete.
        \end{proof}
\begin{proof}[Proof of Proposition \ref{pr2}]
It suffices to prove \eqref{5.17}. To this end, pick sufficiently small $\delta>0$ and $\gamma>0$, and let $\mathfrak{s}_n=\exp(-\exp(n^{1+\gamma}))$ for $n\geq 1$. We shall invoke a representation
\begin{multline*}
f(\mathfrak{s}_n) \sum_{k\ge 2}\frac{(\log k)^{-1/2}}{k^{1/2+\mathfrak{s}_n}}\eta_k=f(\mathfrak{s}_n) \sum_{k=2}^{\lfloor(\log 1/\mathfrak{s}_n)^{1/2}\rfloor}\frac{(\log k)^{-1/2}}{k^{1/2+\mathfrak{s}_n}}\eta_k\\-f(\mathfrak{s}_n) \sum_{k=2}^{\lfloor(\log 1/\mathfrak{s}_n)^{1/2}\rfloor}\frac{(\log k)^{-1/2}}{k^{1/2+\mathfrak{s}_n}}(\eta_k\1_{(A_{k,1}(\mathfrak{s}_n))^c}-\mathbb{E}[\eta_k\1_{(A_{k,1}(\mathfrak{s}_n))^c}])\\+f(\mathfrak{s}_n) \sum_{k\ge\lfloor(\log 1/\mathfrak{s}_n)^{1/2}\rfloor+1}\frac{(\log k)^{-1/2}}{k^{1/2+\mathfrak{s}_n}}(\eta_k\1_{A_{k,1}(\mathfrak{s}_n)}-\mathbb{E}[\eta_k\1_{A_{k,1}(\mathfrak{s}_n)}])\\+f(\mathfrak{s}_n) \sum_{k\ge2}\frac{(\log k)^{-1/2}}{k^{1/2+\mathfrak{s}_n}}(\eta_k\1_{(A_{k,1}(\mathfrak{s}_n))^c}-\mathbb{E}[\eta_k\1_{(A_{k,1}(\mathfrak{s}_n))^c}]).
        \end{multline*}
Here, we have used that $\mathbb{E}[\eta]=0$. The first three terms on the right-hand side converge to $0$ a.s. as $n\to\infty$ by Lemma \ref{lemma_2}, formula \eqref{5.19} of Lemma \ref{lemma5.7}, with $N_1(s)=M(s)=\lfloor(\log 1/s)^{1/2}\rfloor$, and Lemma \ref{lemma_3}, respectively. Lemma \ref{lemma5.8} ensures that
\begin{equation*}
\limsup_{s\to0+}f(s) \sum_{k\ge 2}\frac{(\log k)^{-1/2}}{k^{1/2+s}}\eta_k\ge\limsup_{n\to\infty}f(\mathfrak{s}_n) \sum_{k\ge 2}\frac{(\log k)^{-1/2}}{k^{1/2+\mathfrak{s}_n}}\eta_k\ge 1-\delta\quad\text{\rm a.s.}
\end{equation*}
Sending $\delta$ to $0+$ we arrive at \eqref{5.17}.
\end{proof}
\begin{proof}[Proof of Theorem \ref{main}]
A combination of Propositions \ref{pr1} and \ref{pr2} yields \eqref{3.1} and \eqref{3.2}. To prove \eqref{3.3} we put $H:=\{z\in\mathbb{C}: {\rm Re}\,z>0\}$ and $$X(z):=\sum_{k\ge2}\frac{(\log k)^{-1/2}}{k^{1/2+z}}\eta_k,\quad z\in H,$$ and note that the so defined $X$ is a random analytic function, see p.~247 in \cite{Buraczewski etal:2023}. Hence, its restriction to positive arguments $s\to X(s)=\sum_{k\ge2}(\log k)^{-1/2}k^{-1/2-s}\eta_k$ is a.s. continuous, and so is $s\mapsto (2\sigma^2\log 1/s\, \log^{(3)}1/s )^{-1/2}X(s)$ on $(0,\eee^{-\eee})$. In view of \eqref{3.1} and \eqref{3.2}, we obtain \eqref{3.3} with the help of the intermediate value theorem for continuous functions.
\end{proof}

\section{Proof of Theorem \ref{thm:flt}}\label{sect:flt}

It is more convenient to prove the result in an equivalent form
$$\Big(\frac{1}{s^{1/2}}\sum_{k\geq 2}\frac{(\log k)^{-1/2}}{k^{1/2+\exp(-ts)}}\eta_k\Big)_{t\geq 0}~\Longrightarrow~ (\sigma B(t))_{t\geq 0},\quad s\to+\infty$$ on $C[0,\infty)$. Without loss of generality we can and do assume that $\sigma^2=1$.

In what follows we write $X$ for $X_{-1/2}$. We use a standard approach, which consists of two steps: (a) proving weak convergence of finite-dimensional distributions; (b) checking tightness.

\noindent (a) If $t=0$, then, for $s>0$, $X(\eee^{-ts})=X(1)=\sum_{k\geq 2}(\log k)^{-1/2} k^{-3/2}\eta_k$, and $s^{-1/2}X(1)$ converges in probability to $B(0)=0$ as $s\to +\infty$.

Thus, it suffices to show that, for $t_1, t_2\in (0,\infty)$ (we do not need to consider $t=0$),
\begin{equation}\label{eq:covar}
\me \big[X(\eee^{-t_1s})X(\eee^{-t_2s})\big] ~\sim~ \min(t_1, t_2)s,\quad s\to +\infty
\end{equation}
and check the Lindeberg-Feller condition: for all $\varepsilon>0$ and each fixed $t>0$,
\begin{equation}\label{eq:Lindeberg}
\lim_{s\to +\infty}\frac{1}{s}\sum_{k\geq 2}\me\Big[\Big(\frac{(\log k)^{-1/2}}{k^{1/2+\exp(-ts)}}\eta_k\Big)^2\1_{\{(\log k)^{-1/2}|\eta_k|>\varepsilon k^{1/2+\exp(-ts)}s^{1/2}\}}\Big]=0.
\end{equation}
\noindent {\sc Proof of \eqref{eq:covar}.} Using monotonicity to pass from the series to an integral we obtain
\begin{multline*}
\me \Big[\sum_{k\geq 2}\frac{(\log k)^{-1/2}}{k^{1/2+\exp(-t_1s)}}\eta_k\sum_{j\geq 2}\frac{(\log j)^{-1/2}}{j^{1/2+\exp(-t_2s)}}\eta_j\Big]=\sum_{k\geq 2}\frac{(\log k)^{-1}}{k^{1+\exp(-t_1s)+\exp(-t_2s)}}\\~\sim~ \int_\eee^\infty\frac{(\log y)^{-1}}{y^{1+\exp(-t_1s)+\exp(-t_2s)}}{\rm d}y=\int_1^\infty \frac{\eee^{-(\exp(-t_1s)+\exp(-t_2s))y}}{y}{\rm d}y~\sim~ -\log (\eee^{-t_1s}+\eee^{-t_2s})\\~\sim~ \min(t_1,t_2)s,\quad s\to +\infty.
\end{multline*}
{\sc Proof of \eqref{eq:Lindeberg}.} For each $k\geq 2$ and each $s>0$, $(\log k)^{-1/2}k^{-1/2-\exp(-ts)} \leq (\log 2)^{-1/2}=:1/A$. Hence, the expression under the limit on the left-hand side of \eqref{eq:Lindeberg} does not exceed $$\frac{1}{s}\sum_{k\geq 2}\frac{(\log k)^{-1}}{k^{1+2\exp(-ts)}}\me [\eta^2\1_{\{|\eta|>As^{1/2}\}}].$$  As shown in the proof of \eqref{eq:covar}, $\sum_{k\geq 2} (\log k)^{-1} k^{-1-2\exp(-ts)} \sim ts$ as $s\to +\infty$. Further, $\me [\eta^2]<\infty$ entails $\lim_{s\to +\infty}\me [\eta^2\1_{\{|\eta|>A s^{1/2}\}}]=0$. With these at hand, \eqref{eq:Lindeberg} follows.

\noindent (b) We have to prove tightness on $C[0,T]$ (the set of continuous functions defined on $[0,T]$) for each $T>0$. Since $(B(t))_{t\in [0,T]}$ has the same distribution as $T^{1/2}(B(t))_{t\in [0,1]}$, it is enough to investigate the case $T=1$ only.

Let $M^\ast: (0,\infty)\to \mn_0$ denote a function satisfying $\lim_{s\to +\infty}M^\ast(s)=+\infty$ and $M^\ast(s)=o(s)$ as $s\to+\infty$. We shall need a relation $\sup_{k\leq n}\,|T_k|=O(n^{1/2})$ in probability as $n\to\infty$, which is a consequence of $n^{-1/2}\max_{k\leq n}\,|T_k|\dod |{\rm Normal}\,(0,1)|$ as $n\to\infty$. Repeating the proof of Lemma \ref{lemma_2}, with the aforementioned limit relation replacing \eqref{5.3}, we infer
\begin{equation}\label{eq:inter3}
\lim_{s\to +\infty} \frac{1}{s^{1/2}}\sup_{t\in [0,1]}\,\Big|\sum_{k=2}^{M^\ast(s)}\frac{(\log k)^{-1/2}}{k^{1/2+\exp(-ts)}}\eta_k\Big|=0\quad\text{in probability}.
\end{equation}

From now on, we choose any particular $M^\ast$ as above, for instance, $M^\ast(s)=\lfloor s^{1/2}\rfloor$ and put $a(s):=(\log M^\ast(s))^{1/2}$ for $s\geq 0$. Arguing as in the proof of Lemma \ref{lemma_3} or in the analysis of $I_{n,2}(s)$ in the proof of Lemma \ref{lemma5} we obtain
\begin{equation}\label{lem4.1112}
\lim_{s\to+\infty}\sup_{t\in [0,1]}\sum_{k\ge M^\ast(s)+1}\frac{(\log k)^{-1/2}}{k^{1/2+\exp(-ts)}}|\eta_k|\1_{\{|\eta_k|>k^{1/2}a(s)\}}=0\quad \text{\rm{a.s.}}
\end{equation}
and
\begin{equation}\label{lem4.2112}
\lim_{s\to\infty}\sup_{t\in [0,1]}\sum_{k\ge M^\ast(s)+1}\frac{(\log k)^{-1/2}}{k^{1/2+\exp(-ts)}}\mathbb{E}\big[|\eta_k|\1_{\{|\eta_k|>k^{1/2}a(s)\}}\big]=0.
\end{equation}

Put $\eta^\ast_k(s):=\eta_k\1_{\{|\eta_k|\leq k^{1/2}a(s)\}}-\me \big[\eta_k\1_{\{|\eta_k|\leq k^{1/2}a(s)\}}\big]$ for $k\in\mn$ and $s>0$. Put
\begin{equation*}
X^\ast(t,s):=\frac{1}{s^{1/2}}\sum_{k\ge M^\ast(s)+1}\frac{(\log k)^{-1/2} \eta^\ast_k(s)}{k^{1/2+\exp(-ts)}},\quad t\geq 0,~s>0.
\end{equation*}
In view of \eqref{eq:inter3}, \eqref{lem4.1112} and \eqref{lem4.2112} it remains to prove tightness of the distributions of $(X^\ast(t,s))_{t\in [0,1]}$ for large $s>0$. According to formula (7.8) on p.~82 in \cite{Billingsley:1999}, it is enough to show that, for all $\varepsilon>0$,
\begin{equation}\label{eq:tight}
\lim_{i\to\infty}\limsup_{s\to\infty}\mmp\{\sup_{u,v\in [0,1],\, |u-v|\leq 2^{-i}}\,|X^\ast(u,s)-X^\ast(v,s)|>\varepsilon\}=0.
\end{equation}
The proof of \eqref{eq:tight} follows closely the last part of the proof of Lemma \ref{lemma5}. We use dyadic partitions of $[0,1]$ by points $t^\ast_{j,\,m}:=2^{-j}m$ for $j\in\mn_0$ and $m=0,1,\ldots, 2^j$. Similarly to the argument preceding formula \eqref{5.12} we infer $$\sup_{u,v\in [0,1],\, |u-v|\leq 2^{-i}}\,|X^\ast(u,s)-X^\ast(v,s)|\leq \sum_{j\ge i}\max_{1\le m \le 2^j}|X^\ast(t^\ast_{j,\,m}, s)-X^\ast(t^\ast_{j,\,m-1},s)|.$$ Thus, it suffices to prove that, for all $\varepsilon>0$,
\begin{equation*}
\lim_{i\to\infty}\limsup_{s\to +\infty}\mathbb{P}\Big\{\sum_{j\ge i}\max_{1\le m \le 2^j}|X^\ast(t^\ast_{j,\,m},s)-X^\ast(t^\ast_{j,\,m-1},s)|>\varepsilon s^{1/2}\Big\}=0.
\end{equation*}
Put $a^\ast_j:=2^{-j/2}j^2$ for $j\in\mathbb{N}_0$. The last limit relation follows if we can show that, for all $\varepsilon>0$,
$$\lim_{i\to\infty}\limsup_{s\to +\infty}\sum_{j\ge i}\mathbb{P}\Big\{\max_{1\le m \le 2^j}|X^\ast(t^\ast_{j,\,m})-X^\ast(t^\ast_{j,\,m-1})|>\varepsilon a^\ast_j s^{1/2}\Big\}=0.$$
Denote by $A^\ast(j,s)$ and $B^\ast(u,k,s)$ the counterparts of $A(j,n)$ and $B(u,k,n)$ in the present situation. Then $A^\ast(j,s)\leq 2^{-j}s$ and, if $2^j\geq s$, $B^\ast(u,k,s)\leq |u|2^{-j}s=:C^\ast_j(u,s)$, if $2^j\leq s$, $B^\ast(j,k,s)\leq |u|=:C^\ast_j(u,s)$. With these at hand we obtain
\begin{equation*}
\me\big[\exp\big(\pm u(X^\ast(t^\ast_{j,\,m},s)-X^\ast(t^\ast_{j,\,m-1},s))\big)\big]\leq \exp\Big(\frac{2^{-j}su^2}{2}\eee^{C^\ast_j(u,s)}\Big),\quad u\in\mathbb{R}
\end{equation*}
and thereupon
\begin{multline*}
\mathbb{P}\big\{|X^\ast(t^\ast_{j,\,m},s)-X^\ast(t^\ast_{j,\,m-1},s)|>\varepsilon a^\ast_j s^{1/2} \big\}\leq \exp(-u \varepsilon a^\ast_j s^{1/2})\mathbb{E}[\exp(u|X^\ast(t^\ast_{j,\,m},s)-X^\ast(t^\ast_{j,\,m-1},s)|)]\\\leq  2\exp\Big(-u\varepsilon 2^{-j/2}j^2s^{1/2}+\frac{2^{-j}su^2}{2}\eee^{C^\ast_j(u,s)}\Big).
\end{multline*}
Put $u=\varepsilon 2^{j/2}s^{-1/2}$. Then $C^\ast_j(u,s)\leq \varepsilon$ and further
\begin{multline*}
\sum_{j\ge i}\mathbb{P}\Big\{\max_{1\le m \le 2^j}|X^\ast(t^\ast_{j,\,m},s)-X^\ast(t^\ast_{j,\,m-1},s)|>\varepsilon a^\ast_j s^{1/2}\Big\}\\ \le 2\exp(\varepsilon^2\eee^{\varepsilon}/2)\sum_{j\ge i}2^j \eee^{-\varepsilon^2 j^2}~\to~ 0,\quad i\to\infty.
\end{multline*}
The proof of Theorem \ref{thm:flt} is complete.

\section{A failure of approach based on a strong approximation}\label{sect:failure}

Our proof of Theorem \ref{main} is quite technical. Naturally we wanted to work out a less technical argument. One promising possibility has been to exploit a strong approximation result for centered standard random walks with finite variance, see, for instance, Theorem 12.6 in \cite{Kallenberg:1997}.
\begin{lemma}\label{lem:strong}
There exists a standard Brownian motion $(W(t))_{t\geq 0}$ such that $$T_{\lfloor t\rfloor}-W(t)=o((t\log\log t)^{1/2}),\quad t\to\infty\quad\text{{\rm a.s.}}$$
\end{lemma}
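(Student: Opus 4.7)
\medskip

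\noindent\textbf{Proof proposal.} My plan is to invoke the Skorokhod embedding theorem. Taking $\sigma^2=1$ without loss of generality (as in Section \ref{sect:aux}), Skorokhod's construction produces a standard Brownian motion $(W(t))_{t\geq 0}$ on an enlarged probability space together with an iid sequence of nonnegative stopping times $\tau_1,\tau_2,\ldots$ (relative to a suitable filtration) with $\me[\tau_1]=1$, such that, setting $S_n:=\tau_1+\cdots+\tau_n$, the process $(W(S_n))_{n\geq 0}$ has the same distribution as $(T_n)_{n\geq 0}$. Under this coupling we may identify $T_n$ with $W(S_n)$, and the claim reduces to showing $|W(S_{\lfloor t\rfloor})-W(t)|=o((t\log\log t)^{1/2})$ a.s. I would attack this in two steps: first estimate $|S_n-n|$, then apply the L\'evy modulus of continuity of Brownian motion.

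Under the auxiliary hypothesis $\me[\eta^4]<\infty$, the Skorokhod stopping time satisfies $\mathrm{Var}(\tau_1)<\infty$, and the Hartman--Wintner LIL gives $|S_n-n|=O((n\log\log n)^{1/2})$ a.s. Combining this with L\'evy's a.s.\ estimate $\sup_{0\leq u\leq v\leq 2n,\,v-u\leq h}|W(v)-W(u)|=O((h\log(n/h))^{1/2})$, applied with $h=O((n\log\log n)^{1/2})$, yields $|W(S_n)-W(n)|=O((n\log\log n)^{1/4}(\log n)^{1/2})=o((n\log\log n)^{1/2})$ a.s. The extension to noninteger $t\in[n,n+1)$ is immediate from continuity of $W$ together with $|S_{\lfloor t\rfloor}-t|\leq |S_{\lfloor t\rfloor}-\lfloor t\rfloor|+1$.

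The main obstacle is that the lemma assumes only $\me[\eta^2]<\infty$, under which Skorokhod's embedding yields $\me[\tau_1]=1$ but in general $\mathrm{Var}(\tau_1)=\infty$; hence Hartman--Wintner is not directly available for $S_n$. The classical resolution, due to Strassen, is a truncation argument: decompose $\eta_k=\widehat\eta_k+\widetilde\eta_k$ with $\widehat\eta_k:=\eta_k\1_{\{|\eta_k|\leq k^{1/2}\}}-\me[\eta_k\1_{\{|\eta_k|\leq k^{1/2}\}}]$, use $\sum_k\mmp\{|\eta|>k^{1/2}\}\leq \sum_k k^{-1}\me[\eta^2\1_{\{\eta^2>k\}}]<\infty$ (a consequence of $\me[\eta^2]<\infty$) together with the Borel--Cantelli lemma and Kronecker's lemma to absorb the tail contribution $\sum\widetilde\eta_k$ into the $o((n\log\log n)^{1/2})$ error, and apply the preceding Skorokhod argument to the centred truncated walk, whose embedding stopping times inherit enough moments from the truncation for Hartman--Wintner to apply. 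The careful handling of the non-stationarity of the truncated variables and of the recentering is the most delicate step of the whole argument.
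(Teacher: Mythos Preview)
The paper does not supply its own proof of this lemma; it is quoted as a known result (Strassen's strong invariance principle) with a pointer to Theorem~12.6 in Kallenberg's \emph{Foundations of Modern Probability}. So there is nothing in the paper itself to compare your argument against.

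Your Skorokhod-embedding route is indeed the classical one, but your handling of the finite-second-moment case takes an unnecessary detour. Under $\me[\eta^2]<\infty$ the embedding already gives $\me[\tau_1]=\me[\eta^2]<\infty$, so the ordinary strong law of large numbers yields $S_n/n\to 1$ a.s., and no quantitative rate on $S_n-n$ beyond this is needed. For each fixed $\varepsilon>0$ one has $|S_n-n|\le \varepsilon n$ eventually, and a standard LIL-type increment bound for Brownian motion (the Cs\"org\H{o}--R\'ev\'esz theorem, or a direct Borel--Cantelli computation along a geometric subsequence) gives
\[
\limsup_{n\to\infty}\,\sup_{|u-n|\le \varepsilon n}\frac{|W(u)-W(n)|}{(n\log\log n)^{1/2}}\le (2\varepsilon)^{1/2}\quad\text{a.s.}
\]
Letting $\varepsilon\downarrow 0$ already yields $|W(S_n)-W(n)|=o((n\log\log n)^{1/2})$ a.s. The auxiliary fourth-moment hypothesis, the Hartman--Wintner step for $(S_n)$, and the subsequent truncation of $\eta_k$ at level $k^{1/2}$---together with the non-stationarity complications you flag as ``the most delicate step''---can therefore all be dispensed with.
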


Implicit in the preceding proofs is the fact that the principal contribution to the LIL is made by the fragment of the original series which has the variance comparable to the variance of the full series. More precisely, one may reduce attention to the sum $\sum_{k=N_1(s)}^{N_2(s)}\frac{(\log k)^{-1/2}}{k^{1/2+s}}\eta_k$, where $N_1$ and $N_2$ are positive integer-valued functions satisfying $\lim_{s\to 0+}(\log\log N_1(s)/\log 1/s)=0$, $\lim_{s\to 0+}s\log N_2(s)=0$ and
\begin{equation}\label{eq:inter2}
\lim_{s\to 0+}(\log\log N_2(s)/\log 1/s)=1.
\end{equation}
The reason is that $${\rm Var}\,\Big(\sum_{k=N_1(s)}^{N_2(s)}\frac{(\log k)^{-1/2}}{k^{1/2+s}}\eta_k\Big)~\sim~{\rm Var}\,\Big(\sum_{k\geq 2}\frac{(\log k)^{-1/2}}{k^{1/2+s}}\eta_k\Big)~\sim~\log 1/s,\quad s\to 0+.$$

We hoped it would be possible to work with Gaussian random variables given by \newline $\int_{N_1(s)}^{N_2(s)}\frac{(\log x)^{-1/2}}{x^{1/2+s}}{\rm d}W(x)$ in place of $\int_{N_1(s)}^{N_2(s)}\frac{(\log x)^{-1/2}}{x^{1/2+s}} {\rm d}T_{\lfloor x\rfloor}$. Unfortunately, Lemma \ref{lem:strong} does not seem to secure such a possibility. Indeed, after integrating by parts we intended to show that
$$\lim_{s\to 0+}f(s)\int_{N_1(s)}^{N_2(s)}\frac{|T_{\lfloor x\rfloor}-W(x)|}{(\log x)^{1/2} x^{3/2+s}}{\rm d}x=0\quad\text{a.s.}$$ In view of Lemma \ref{lem:strong} the latter would be a consequence of
\begin{equation}\label{eq:inter}
\lim_{s\to 0+}f(s)\int_{N_1(s)}^{N_2(s)}\frac{(\log\log x)^{1/2}}{(\log x)^{1/2} x^{1+s}}{\rm d}x=0.
\end{equation}
Since
\begin{multline*}
\int_{N_1(s)}^{N_2(s)}\frac{(\log\log x)^{1/2}}{(\log x)^{1/2} x^{1+s}}{\rm d}x=\frac{1}{s^{1/2}}\int_{s\log N_1(s)}^{s\log N_2(s)}\frac{(\log 1/s+\log z)^{1/2}\eee^{-z}}{z^{1/2}}{\rm d}z\\\leq \frac{(\log\log N_2(s))^{1/2}}{s^{1/2}}\int_0^{s\log N_2(s)} \frac{\eee^{-z}}{z^{1/2}}{\rm d}z~\sim~ 2 (\log N_2(s)\log\log N_2(s))^{1/2},\quad s\to 0+, 
\end{multline*}
the validity of \eqref{eq:inter} required $\lim_{s\to 0+}(\log N_2(s)/\log 1/s)=0$, which was incompatible with \eqref{eq:inter2}.

We note in passing that another strong approximation of $(T_n)_{n\geq 0}$, this time by sums of independent Gaussian random variables, has a better error 
the big $O$ of square root, see \cite{Major:1979}. Revisiting the calculation above reveals that this approximation does not seem to help either.

\noindent \textbf{Acknowledgment.} The present work was supported by the National Research Foundation of Ukraine (project 2023.03/0059 ‘Contribution to modern theory of random series’).

\end{document}